\newtheorem{theorem}{Theorem}[section]
\newtheorem{lemma}[theorem]{Lemma}
\theoremstyle{definition}
\newtheorem{corollary}[theorem]{Corollary}
\theoremstyle{remark}
\numberwithin{equation}{section}
\DeclareMathOperator{\ind}{ind}
\DeclareMathOperator{\rank}{rank}
\begin{document}

\title{Representations for the Drazin inverse of the generalized Schur complement}

%    Information for first author
\author{Daochang Zhang, Xiankun Du}
%    Address of record for the research reported here
\address{School of Mathematics, Jilin University, Changchun 130012, China }
%    Current address
%\curraddr{}
\email{daochangzhang@gmail.com}
\email{duxk@jlu.edu.cn}
%    \thanks will become a 1st page footnote.
\thanks{This work was supported by NSFC(Grant No. 11371165)}
%    General info
\subjclass[2000]{15A09;65F20}

\date{December 4, 2013.}

%\dedicatory{This paper is dedicated to our advisors.}
\keywords{Drazin inverse, generalized Schur complement, Sherman-Morrison-Woodbury formula.}

\begin{abstract}
 In this paper we present expressions for the Drazin inverse of the generalized Schur complement $A-CD^{d}B$ in terms of the Drazin inverses of $A$ and the generalized Schur complement $D-BA^{d}C$ under less and weaker restrictions, which generalize several results in the literature and the formula of Sherman-Morrison-Woodbury type.
\end{abstract}

\maketitle

\section{introduction}
Let  $\mathbb{C}^{n\times n}$ denote the set of $n\times n$ complex matrices. For a matrix $A \in \mathbb{C}^{n\times n}$, the Drazin inverse of $A$ is the unique matrix $A^d$ such that
\begin{equation*}\label{defin1}
  AA^d=A^dA,~~~ A^dAA^d=A^d,~~~ A^{k}=A^{k+1}A^d,
\end{equation*}
 where $k$ is the smallest non-negative integer such that $\rank(A^{k}) = \rank(A^{k+1})$, called index of $A$ and denoted by $\ind(A)$. The Drazin inverse is a generalization of inverses and group inverses of matrices, which was introduced by M. P. Drazin in \cite{Drazin(1958)} for associative rings and semigroups. There is widespread applications of  Drazin inverses of complex matrices in various fields,
such as differential equations, control theory, Markov chains, iterative methods and so on
(see \cite{Israel (2003),Campbell (1991)}).

In 2002, Wei \cite{Wei (2002)} derived explicit expressions of the Drazin inverse of a modified  matrix $A-CB$ under certain circumstances, which extend results of \cite{Meyer (1980),Shoaf (1975)} and can be applied to update finite Markov chains.

In 2012, Cvetkovi\'{c}-Ili\'{c} and Ljubisavljevi\'{c} in \cite{Cvetkovic (2012)} generalize results of \cite{Wei (2002)}
to the generalized Schur complement $A-CD^{d}B$ under the following conditions:
$A^{\pi}C=0,~BA^{\pi}=0,~CD^{\pi}Z^{d}B=0,~CD^{d}Z^{\pi}B=0, ~CZ^{d}D^{\pi}B=0 ~\text{and}~CZ^{\pi}D^{d}B=0$,
where $Z$ denotes the generalized Schur complement $D-BA^dC$ of $A$. This result also extends the formula of Sherman-Morrison-Woodbury type
$$
  (A-CD^{-1}B)^{-1}=A^{-1}+A^{-1}C(D-BA^{-1}C)^{-1}BA^{-1},
$$
where the matrices $A,D$ and the Schur complement $D-BA^{-1}C$ are invertible. Similar generalizations are obtained by Dopazo and Mart\'{i}nez-Serrano \cite{Dopazo (2013)} in 2013
 under the following conditions:
$$
A^{\pi}C=0, ~CD^{\pi}=0, ~D^{\pi}B=0, ~CZ^{\pi}=0~\text{and}~ Z^{\pi}B=0.
$$
Recently, Dijana \cite{Dijana 2013} and Shakoor, Yang, Ali \cite{Shakoor} gave representations for the Drazin inverses of generalized Schur complement $A-CD^{d}B$ under new conditions to generalize some results in the literature.

The aim of this paper is to present expressions for the Drazin inverse of the generalized Schur complement $A-CD^{d}B$ in terms of the Drazin inverses of $A$ and the generalized Schur complement $D-BA^{d}C$. In Section 2, by using Dedekind finiteness of unital subalgebras of $\mathbb{C}^{n\times n}$, we find that some assumptions of theorems in the literature are superfluous. We give representations of $(A-CD^{d}B)^d$ under less and weaker conditions, which generalizes results of
\cite{Cvetkovic (2012),Dijana 2013,Dopazo (2013),Shakoor,Wei (2002)}, of course, including the formula of Sherman-Morrison-Woodbury type.  In Section 3, we derive a new formula for the Drazin inverse of $A-CD^{d}B$ under the following conditions $A^{\pi}CD^{d}B=0,~D^{\pi}BA^{d}C=0$ and $D^{d}BAA^{d}=D^{d}DBA^{d}$, a corollary of which recovers a generalization of Jacobson's Lemma (see \cite[Theorem 3.6]{Castro (2010)}) for the case of matrices.

Throughout this paper, let $A$ and $D$ be square matrices (not necessarily with the same orders), $B$ and $C$ be matrices with suitable orders, and $A^{\pi}=I-AA^{d}$. The generalized Schur complements will be denoted by
\begin{equation*}\label{sign}
 S=A-CD^{d}B~~\text{and}~~Z=D-BA^{d}C.
 \end{equation*}

We adopt the convention that $A^0=I$, the identity matrix, for any square matrix $A$ and $\sum_{i=0}^k\ast=0$ in case $k<0$.
\section{Drazin inverse of the generalized Schur complement}
\begin{lemma}\label{P+Q}(\cite[Theorem 2.1]{Hartwig (2001)})
Let $P$ and $Q$ be ${n\times n}$ matrices. If $PQ=0$, then
$$
  (P+Q)^{d}=Q^{\pi}\sum_{i=0}^{t-1}Q^{i}(P^{d})^{i+1}+\sum_{i=0}^{s-1}(Q^{d})^{i+1}P^{i}P^{\pi},
$$
where $s= \ind(P)$ and $t= \ind(Q)$.
\end{lemma}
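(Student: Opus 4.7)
The strategy is direct verification that the candidate
\[
X = Q^\pi S_1 + S_2 P^\pi, \quad S_1 := \sum_{i=0}^{t-1}Q^i(P^d)^{i+1}, \quad S_2 := \sum_{i=0}^{s-1}(Q^d)^{i+1}P^i,
\]
is the Drazin inverse of $P+Q$. The first task is to extract computational consequences of $PQ=0$. Using the representations $P^d = (P^d)^2 P$ and $Q^d = Q(Q^d)^2$, multiplying the hypothesis appropriately and iterating gives the two workhorse identities $(P^d)^k Q = 0$ and $P(Q^d)^k = 0$ for all $k\ge 1$. In particular, $P^\pi Q = Q$, $PQ^\pi = P$, $S_1 Q = 0$, and $PS_2 = 0$. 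Virtually every cancellation in the sequel is an application of one of these.

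With these in hand I would verify the commutation relation $(P+Q)X = X(P+Q)$ by expanding both products. The cross term $PS_2 P^\pi$ vanishes outright, while in $PS_1$ only the $i=0$ summand survives (producing $PP^d$), and $QQ^\pi S_1$ is just a shifted copy of $S_1$; a parallel reduction on $X(P+Q)$ matches. The second axiom $X(P+Q)X = X$ falls out by the same bookkeeping: after annihilating cross terms via the identities above, $X(P+Q)X$ decouples into a piece purely in $Q^\pi S_1$ and a piece purely in $S_2 P^\pi$, each of which telescopes back to the corresponding block of $X$.

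The main obstacle is the index property $(P+Q)^{k+1} X = (P+Q)^k$ for a suitable $k$. Since $PQ = 0$, an easy induction yields the clean expansion $(P+Q)^n = \sum_{j=0}^n Q^j P^{n-j}$, reducing everything to manipulations of two finite sums. Taking $k = s+t-1$ (so that both nilpotent parts $PP^\pi$ and $QQ^\pi$ exhaust their indices), I would analyse $(P+Q)^{k+1}X$ slice by slice: each piece $Q^j P^{k+1-j}$ with $k+1-j \ge 1$ interacts only with $Q^\pi S_1$ (because $PS_2 = 0$) and collapses via the idempotence of $PP^d$ to $Q^j P^{k-j}(PP^d)$, while the lone slice $Q^{k+1}X$ is handled using $(QQ^d)^m = QQ^d$ together with the nilpotency $Q^t Q^\pi = 0$. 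The delicate part is the combinatorial matching of this telescoped expression against $(P+Q)^k = \sum_{j=0}^k Q^j P^{k-j}$ and checking that the residual contributions from $PP^\pi$ and $QQ^\pi$ cancel exactly. Once this is carried out, $X = (P+Q)^d$, and the bound $\ind(P+Q) \le s+t-1$ emerges as a by-product.
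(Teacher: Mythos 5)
The paper does not actually prove this lemma: it is imported verbatim from \cite[Theorem 2.1]{Hartwig (2001)}, so there is no internal argument to measure your proof against. On its own merits your direct verification is correct. The workhorse identities do follow as you say: from $P^{d}=(P^{d})^{2}P$ and $Q^{d}=Q(Q^{d})^{2}$ one gets $(P^{d})^{k}Q=0$ and $P(Q^{d})^{k}=0$, hence $P^{\pi}Q=Q$, $PQ^{\pi}=P$, $S_{1}Q=0$, $PS_{2}=0$, and also $S_{1}(Q^{d})^{i}=0$ and $P^{\pi}(Q^{d})^{i}=(Q^{d})^{i}$, which are what make $X(P+Q)X$ decouple into the two blocks of $X$ as you claim. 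The expansion $(P+Q)^{n}=\sum_{j=0}^{n}Q^{j}P^{n-j}$ is valid under $PQ=0$, and the ``delicate combinatorial matching'' you defer does close up: with $k=s+t-1$ one finds $(P+Q)^{k}-(P+Q)^{k+1}X=\sum_{i=0}^{s-1}Q^{k-i}Q^{\pi}P^{i}P^{\pi}$, which vanishes because $k-i\ge t$ forces $Q^{k-i}Q^{\pi}=0$ (and the degenerate case $s=t=0$ cannot occur for nonzero matrices, since two invertible matrices cannot satisfy $PQ=0$). So the sketch is sound, though the step you flag as delicate is indeed where most of the work lives and should be written out. Compared with the cited source, which obtains the formula by structural means (core--nilpotent decompositions and the Drazin inverse of block-triangular matrices, as in Lemma \ref{triangle}), your axiom-by-axiom verification is more elementary and self-contained at the cost of heavier bookkeeping, and it yields the index bound $\ind(P+Q)\le s+t-1$ as a by-product.
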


\begin{lemma}\label{(lemma) P nilpotent}
If $A^{\pi}CD^{d}B=0$, then
$$
  S^{d}=S_{A}^{d}+\sum_{i=0}^{k-1}(S_{A}^{d})^{i+2}SA^{i}A^{\pi},
$$
where
$S_{A}=SAA^{d}$ and $k= \ind(A)$.
\end{lemma}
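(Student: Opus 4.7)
The plan is to split $S = SAA^{d} + SA^{\pi} = S_{A} + SA^{\pi}$ via $I = AA^{d} + A^{\pi}$ and then apply Lemma~\ref{P+Q} to this two-summand decomposition. The main issue is to determine which summand is nilpotent and to check that the required product vanishes; this is where the hypothesis $A^{\pi}CD^{d}B = 0$ enters.

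The key observation is that $(SA^{\pi})\,S_{A} = 0$. Since $A$ commutes with $A^{\pi}$, the hypothesis gives $A^{\pi}S = A^{\pi}A - A^{\pi}CD^{d}B = AA^{\pi}$. Hence
\[
SA^{\pi}\cdot SAA^{d} = S(A^{\pi}S)AA^{d} = S\cdot AA^{\pi}\cdot AA^{d} = 0,
\]
using $A^{\pi}AA^{d} = 0$. Iterating the identity $A^{\pi}S = AA^{\pi}$ inductively yields $(SA^{\pi})^{i} = SA^{i-1}A^{\pi}$ for every $i \ge 1$, and since $A^{k}A^{\pi} = A^{k} - A^{k+1}A^{d} = 0$, we conclude $(SA^{\pi})^{k+1} = 0$. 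Thus $SA^{\pi}$ is nilpotent, $(SA^{\pi})^{d} = 0$, $(SA^{\pi})^{\pi} = I$, and its index $s := \ind(SA^{\pi})$ satisfies $s \le k+1$.

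Applying Lemma~\ref{P+Q} with $P = SA^{\pi}$ and $Q = S_{A}$ (so $PQ = 0$), the first of the two sums in that lemma vanishes because $(SA^{\pi})^{d} = 0$, while the second collapses via $(SA^{\pi})^{\pi} = I$ to
\[
S^{d} = \sum_{i=0}^{s-1}(S_{A}^{d})^{i+1}(SA^{\pi})^{i}.
\]
Peeling off the $i=0$ term, substituting $(SA^{\pi})^{i} = SA^{i-1}A^{\pi}$ for $i \ge 1$, and reindexing with $j = i-1$ gives $S^{d} = S_{A}^{d} + \sum_{j=0}^{s-2}(S_{A}^{d})^{j+2}SA^{j}A^{\pi}$. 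Since $SA^{j}A^{\pi} = (SA^{\pi})^{j+1} = 0$ for $j \ge s-1$ and $s - 2 \le k - 1$, the upper limit may be freely extended to $k-1$, producing exactly the expression in the statement.

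The main subtlety I expect is orientation: the alternative product $S_{A}\cdot SA^{\pi}$ does \emph{not} vanish (it leaves a residue $-SCD^{d}BA^{\pi}$), so one must verify the product in the correct order and align the roles of $P$ and $Q$ in Lemma~\ref{P+Q} accordingly. Once the vanishing $(SA^{\pi})S_{A} = 0$ and the nilpotence bound $s \le k+1$ are in hand, the rest is bookkeeping.
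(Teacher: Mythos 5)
Your proof is correct and follows essentially the same route as the paper: the decomposition $S = S_{A} + SA^{\pi}$, the verification that $(SA^{\pi})S_{A}=0$ via $A^{\pi}S = AA^{\pi}$, the nilpotence $(SA^{\pi})^{i}=SA^{i-1}A^{\pi}$, and the application of Lemma~\ref{P+Q} with the nilpotent summand as $P$. The only cosmetic difference is that the paper records the two-sided bound $k\le\ind(SA^{\pi})\le k+1$ while you use only the upper bound, which indeed suffices for adjusting the summation limit.
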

\begin{proof} We first note that $S=SA^{\pi}+S_{A}$.
Since $A^{\pi}CD^{d}B=0$, we have $A^{\pi}S_A=0$ and $$(SA^\pi)^i=S(A^\pi S)^{i-1}A^\pi=S(AA^\pi)^{i-1}A^\pi=SA^{i-1}A^\pi$$ for any positive integer $i$. Let $k= \ind(A)$. It has been known that $k$ is the least nonnegative integer such that $A^kA^\pi=0$. Thus $SA^\pi$ is nilpotent and $k\leq\ind (SA^\pi)\leq k+1$, and so $(SA^{\pi})^{d}=0$ and $(SA^{\pi})^{\pi}=I$. Let $s= \ind(SA^{\pi})$. Then Lemma \ref{P+Q} implies that
$$
 S^{d}=\sum_{i=0}^{s-1}(S_{A}^{d})^{i+1}(SA^{\pi})^{i}=\sum_{i=0}^{s}(S_{A}^{d})^{i+1}(SA^{\pi})^{i}
      =S_{A}^{d}+\sum_{i=0}^{s-1}(S_{A}^{d})^{i+2}SA^{i}A^{\pi}.
$$
Since $s-1\leq k\leq s$ and $A^iA^\pi=0$ for any $i\geq k$, we have $$
 S^{d}= S_{A}^{d}+\sum_{i=0}^{k-1}(S_{A}^{d})^{i+2}SA^{i}A^{\pi},
$$
as desired.
\end{proof}

\begin{lemma}\label{ECE}
  Let $X,~Y$ and $e$ be $n\times n$ matrices and $e^{2}=e$. If $XeY=e$, then $eYeXe=e$.
\end{lemma}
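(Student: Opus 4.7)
The plan is to recognize that the identity to be proved lives naturally inside the corner subalgebra $e\mathbb{C}^{n\times n}e$, whose multiplicative identity is $e$ itself (since $e^2=e$). This subalgebra is finite-dimensional, hence a Dedekind-finite unital algebra, and that is the structural fact the paper has already advertised as the key tool.

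First I would multiply the hypothesis $XeY=e$ on the left and right by $e$. Using $e^2=e$ on the right-hand side gives
\[
 eXeYe \;=\; e.
\]
Setting $a=eXe$ and $b=eYe$, both elements of the corner algebra $e\mathbb{C}^{n\times n}e$, the displayed equation reads $ab=e$, i.e.\ $b$ is a right inverse of $a$ in this algebra with unit $e$.

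Next I would invoke Dedekind finiteness: the corner $e\mathbb{C}^{n\times n}e$ is a subspace of the finite-dimensional space $\mathbb{C}^{n\times n}$, hence finite-dimensional, and every finite-dimensional unital algebra over a field is Dedekind-finite (a one-sided inverse is two-sided, which one can verify directly by noting that left multiplication by $a$ on $e\mathbb{C}^{n\times n}e$ is then surjective, hence injective, hence bijective). Therefore $ab=e$ forces $ba=e$, which unpacks to $eYeXe=e$, as required.

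The main (indeed only) obstacle is the justification of Dedekind finiteness of the corner algebra, but this is immediate from its finite-dimensionality over $\mathbb{C}$; once that is stated, the rest is a clean two-line reduction.
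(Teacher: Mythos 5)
Your proof is correct and follows essentially the same route as the paper: both pass to the corner algebra $e\mathbb{C}^{n\times n}e$ (the paper writes it as $W=\{M : eM=Me=M\}$), observe $(eXe)(eYe)=e$, and conclude by Dedekind finiteness of a finite-dimensional unital algebra. Your parenthetical surjective-implies-injective argument just makes explicit what the paper delegates to a citation of Lam.
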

\begin{proof}
  Let $W=\{M \in \mathbb{C}^{n\times n}~~|~~eM=Me=M\}$.
  Then $W$ is a finite dimensional algebra over $\mathbb{C}$ with identity $e$, and so $W$ is Dedekind finite (see \cite[Corollary 21.27]{Lam}).
  Note that $eXe,~eYe \in W$ and $(eXe)(eYe)=e$. Then $(eYe)(eXe)=e$, that is, $eYeXe=e$.
\end{proof}

If $\ind(A)=1$, then $A^{d}$ is called the group inverse of $A$ and denoted by $A^{\#}$.
In what follows, let $H=BA^{d}$ and $K=A^{d}C$.
\begin{lemma}\label{(lemma) Qd}
 Let $S_{A}=AA^{d}SAA^{d}$ and let $M=A^{d}+KZ^{d}H$. Then the following statements are equivalent:
 \begin{enumerate}
  \item $KD^{\pi}Z^{d}H=KD^{d}Z^{\pi}H$;
   \item $S_{A}M=AA^{d}$;
   \item $MS_{A}=AA^{d}$;
   \item $KZ^{\pi}D^{d}H=KZ^{d}D^{\pi}H$.
\end{enumerate}
Furthermore, under the conditions above $S_{A}$ has the group inverse
$$
  S_{A}^{\#}=A^{d}+KZ^{d}H.
$$
\end{lemma}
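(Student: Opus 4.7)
The approach is to treat $S_A$ and $M$ as elements of the corner subalgebra $W = \{X \in \mathbb{C}^{n\times n} : eXe = X\}$, where $e = AA^d$ is idempotent and serves as the identity of $W$. One checks immediately that $S_A = eSe$ lies in $W$, and that $M \in W$ because $eA^d = A^d e = A^d$ (from $A^d A A^d = A^d$ combined with $A^d A = AA^d$) and $He = H$. Being a finite-dimensional unital algebra, $W$ is Dedekind finite, so Lemma \ref{ECE} applied inside $W$ yields $PQ = e \iff QP = e$ for $P, Q \in W$; this will deliver (2) $\Leftrightarrow$ (3) automatically.

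The core of the proof is a direct expansion of $S_A M$. Using only the single substitution $BA^d C = D - Z$ (the definition of $Z$ rearranged) together with elementary absorption identities such as $A^2 A^d \cdot A^d = e$ and $eA^d = A^d$, all cross terms collapse into
\begin{equation*}
S_A M \;=\; e \;+\; eC\bigl[D^\pi Z^d - D^d Z^\pi\bigr]H.
\end{equation*}
Therefore (2) is equivalent to $eCD^\pi Z^d H = eCD^d Z^\pi H$; left-multiplying this by $A^d$ (using $A^d e = A^d$) converts it to (1), and left-multiplying (1) by $A$ recovers the $eC$ form. This gives (1) $\Leftrightarrow$ (2).

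A symmetric expansion of $MS_A = MSe$, via the same substitution, yields
\begin{equation*}
MS_A \;=\; e \;+\; K\bigl[Z^d D^\pi - Z^\pi D^d\bigr]Be,
\end{equation*}
where $Be = BAA^d = HA$. Right-multiplying by $A^d$ (and using $He = H$) converts $MS_A = e$ into (4); right-multiplying (4) by $A$ reverses this step. Hence (3) $\Leftrightarrow$ (4). Combined with Dedekind finiteness (2) $\Leftrightarrow$ (3), the chain (1) $\Leftrightarrow$ (2) $\Leftrightarrow$ (3) $\Leftrightarrow$ (4) is complete. The final clause is then immediate: under any of these conditions $S_A M = M S_A = e$, so $S_A$ and $M$ commute, while $S_A M S_A = e S_A = S_A$ and $M S_A M = eM = M$ because both lie in $W$, verifying the three axioms that make $M = A^d + KZ^d H$ the group inverse of $S_A$.

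The main obstacle is the algebraic bookkeeping in the two expansions: tracking exactly which $A$, $A^d$ and $e$ factors absorb into which, and recognising that the single substitution $BA^d C = D - Z$ is what causes the cross terms to telescope into the clean projection differences. The use of Lemma \ref{ECE} is essential, since (1) and (4) involve genuinely different orderings ($D^\pi Z^d$ versus $Z^d D^\pi$, and $D^d Z^\pi$ versus $Z^\pi D^d$); proving their equivalence directly would be awkward, whereas Dedekind finiteness of $W$ sidesteps the issue entirely.
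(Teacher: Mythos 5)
Your proposal is correct and follows essentially the same route as the paper: a direct expansion of $S_AM$ (and symmetrically $MS_A$) using $BA^dC=D-Z$ to reduce (2) to (1) and (3) to (4), with Dedekind finiteness of the corner algebra via Lemma~\ref{ECE} supplying (2) $\Leftrightarrow$ (3), and the group inverse then read off from $S_AM=MS_A=AA^d$.
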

\begin{proof}
Let $A^{e}=AA^{d}$ and $Z^{e}=ZZ^{d}$. Then
$$
S_{A}M=A^{e}+AKZ^{d}H-AKD^{d}H-AKD^{d}(D-Z)Z^{d}H.
$$
Thus (2) holds if and only if $AK(Z^{d}-D^{d}-D^{d}(D-Z)Z^{d})H=0$,
or equivalently $K(D^{\pi}Z^{d}-D^{d}Z^{\pi})H=0$, that is,  (1) holds.
Similarly, (3) is equivalent to (4). Lemma~\ref{ECE} implies equivalence of (2) and (3). Furthermore, (2) and (3) give $
  S_{A}^{\#}=M=A^{d}+KZ^{d}H
$.
\end{proof}

Now we can extend the Sherman-Morrison-Woodbury
formula to the Drazin inverses of the generalized Schur complements $S=A-CD^{d}B$ and $Z=D-BA^{d}C$.

\begin{theorem}\label{ApiCDdB=0}
 If $A^{\pi}CD^{d}B=0$ and $KD^{\pi}Z^{d}H=KD^{d}Z^{\pi}H$, then
$$
  S^{d}=A^{d}+KZ^{d}H+\sum_{i=0}^{k-1}(A^{d}+KZ^{d}H)^{i+2}SA^{i}A^{\pi},
$$
where $H=BA^{d},~K=A^{d}C$;
or alternatively
\begin{multline*}
S^{d}=A^{d}+A^{d}CZ^{d}BA^{d}-\sum_{i=0}^{k-1}(A^{d}+A^{d}CZ^{d}BA^{d})^{i+1}A^{d}CZ^{d}BA^{i}A^{\pi}\\
 +\sum_{i=0}^{k-1}(A^{d}+A^{d}CZ^{d}BA^{d})^{i+1}A^{d}C(Z^{d}D^{\pi}-Z^{\pi}D^{d})BA^{i},
\end{multline*}
where
$k=\ind(A)$.
\end{theorem}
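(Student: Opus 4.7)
The plan is to assemble the first formula from Lemmas~\ref{(lemma) P nilpotent} and~\ref{(lemma) Qd}, and then to derive the alternative form by one direct simplification of $S_{A}^{d}SA^{i}A^{\pi}$.

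First, I would reconcile the two definitions of ``$S_{A}$'' in play: Lemma~\ref{(lemma) P nilpotent} sets $S_{A}=SAA^{d}$, while Lemma~\ref{(lemma) Qd} sets $S_{A}=AA^{d}SAA^{d}$. Our hypothesis $A^{\pi}CD^{d}B=0$ means $AA^{d}CD^{d}B=CD^{d}B$, and expanding both expressions shows $SAA^{d}=AA^{d}SAA^{d}$, so these two agree. Lemma~\ref{(lemma) Qd} then delivers $S_{A}^{d}=S_{A}^{\#}=A^{d}+KZ^{d}H$, and inserting this in the conclusion of Lemma~\ref{(lemma) P nilpotent} produces the first claimed expression for $S^{d}$.

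For the alternative form I would factor $(S_{A}^{d})^{i+2}SA^{i}A^{\pi}=(S_{A}^{d})^{i+1}\bigl(S_{A}^{d}SA^{i}A^{\pi}\bigr)$ and simplify the bracket once and for all. Expanding $S_{A}^{d}S=(A^{d}+A^{d}CZ^{d}BA^{d})(A-CD^{d}B)$ and replacing $BA^{d}C$ by $D-Z$ organizes the pieces as $A^{d}A$, $A^{d}CD^{d}B$, $A^{d}CZ^{d}BA^{d}A$, $A^{d}CZ^{d}DD^{d}B$, and $A^{d}CZZ^{d}D^{d}B$. Right-multiplication by $A^{i}A^{\pi}$ annihilates every summand that contains an interior $AA^{d}$ (because $AA^{d}\cdot A^{i}A^{\pi}=0$), and then substituting $ZZ^{d}=I-Z^{\pi}$ and $DD^{d}=I-D^{\pi}$ in the survivors collapses the bracket to
$$S_{A}^{d}SA^{i}A^{\pi}=-A^{d}CZ^{d}BA^{i}A^{\pi}+A^{d}C(Z^{d}D^{\pi}-Z^{\pi}D^{d})BA^{i}A^{\pi}.$$

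Finally, condition~(4) of Lemma~\ref{(lemma) Qd}, which is equivalent to our hypothesis, reads $K(Z^{d}D^{\pi}-Z^{\pi}D^{d})H=0$; right-multiplying by $A^{i+1}$ and using that $A$ commutes with $AA^{d}$ yields $A^{d}C(Z^{d}D^{\pi}-Z^{\pi}D^{d})BA^{i}AA^{d}=0$, so the trailing $A^{\pi}$ in the second sum may be dropped. Reassembling with the factor $(S_{A}^{d})^{i+1}=(A^{d}+A^{d}CZ^{d}BA^{d})^{i+1}$ gives the stated alternative form. The main obstacle I anticipate is the bookkeeping in the reduction of $S_{A}^{d}SA^{i}A^{\pi}$---tracking which of the many $DD^{d}$, $D^{\pi}$, $ZZ^{d}$, $Z^{\pi}$ terms survive and with which sign---rather than any deeper structural issue.
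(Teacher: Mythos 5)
Your proposal is correct and follows essentially the same route as the paper: reconcile the two definitions of $S_{A}$ using $A^{\pi}CD^{d}B=0$, combine Lemmas~\ref{(lemma) P nilpotent} and~\ref{(lemma) Qd} for the first formula, and obtain the alternative form by expanding $S_{A}^{d}SA^{\pi}$ (substituting $BA^{d}C=D-Z$) and invoking the equivalent condition~(4) of Lemma~\ref{(lemma) Qd} to drop the trailing $A^{\pi}$. The paper's proof is just a terser version of the same computation.
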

\begin{proof} Since $A^{\pi}CD^{d}B=0$, we have $S_A=AA^dSAA^d=SAA^d$. Thus the first equation follows from Lemma \ref{(lemma) P nilpotent} and Lemma \ref{(lemma) Qd}. The second equation follows from the first one and the fact that
\begin{align*}
(A^{d}+KZ^{d}H)SA^{\pi}=&(AA^{d}-KD^{d}B+KZ^{d}BA^{e}-KZ^{d}(D-Z)D^{d}B)A^{\pi}\\
                       =&K(Z^{d}D^{\pi}-Z^{\pi}D^{d})B-KZ^{d}BA^{\pi}.
\end{align*}
\end{proof}

Theorem \ref{ApiCDdB=0} generalizes \cite[Theorem 2.1]{Cvetkovic (2012)},  \cite[Theorem 1]{Dijana 2013},                \cite[Theorem 2]{Dijana 2013}, \cite[Theorem 2.1]{Dopazo (2013)}, \cite[Theorem 2.1]{Shakoor} and  \cite[Theorem 2.1]{Wei (2002)}.

\begin{corollary}
   If $A^{\pi}CD^{d}B=0$, $CD^{\pi}Z^{d}B=0$ and $CD^{d}Z^{\pi}B=0$, then
\begin{multline*}
S^{d}=A^{d}+A^{d}CZ^{d}BA^{d}-\sum_{i=0}^{k-1}(A^{d}+A^{d}CZ^{d}BA^{d})^{i+1}A^{d}CZ^{d}BA^{i}A^{\pi}\\
 +\sum_{i=0}^{k-1}(A^{d}+A^{d}CZ^{d}BA^{d})^{i+1}A^{d}C(Z^{d}D^{\pi}-Z^{\pi}D^{d})BA^{i},
\end{multline*}
where
$k=\ind(A)$.
\end{corollary}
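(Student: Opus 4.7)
The plan is to deduce this corollary as an immediate specialization of Theorem~\ref{ApiCDdB=0}. The hypothesis $A^{\pi}CD^{d}B=0$ is already one of the two hypotheses of that theorem, so the only thing left to verify is the commutation-type condition $KD^{\pi}Z^{d}H=KD^{d}Z^{\pi}H$, where $H=BA^{d}$ and $K=A^{d}C$.

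First, I would rewrite both sides using the definitions of $H$ and $K$:
\[
KD^{\pi}Z^{d}H = A^{d}\bigl(CD^{\pi}Z^{d}B\bigr)A^{d}, \qquad KD^{d}Z^{\pi}H = A^{d}\bigl(CD^{d}Z^{\pi}B\bigr)A^{d}.
\]
The two extra hypotheses $CD^{\pi}Z^{d}B=0$ and $CD^{d}Z^{\pi}B=0$ then force each of these to vanish individually, so in particular $KD^{\pi}Z^{d}H=0=KD^{d}Z^{\pi}H$. This is a much stronger statement than the mere equality required by Theorem~\ref{ApiCDdB=0}, but of course it implies that equality.

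With both hypotheses of Theorem~\ref{ApiCDdB=0} now verified, I would simply invoke the second (alternative) formula of that theorem to get the claimed expression for $S^{d}$, with the same summation index $k=\ind(A)$.

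There is essentially no obstacle here: the only observation needed is the trivial one that pre- and post-multiplying the vanishing products $CD^{\pi}Z^{d}B$ and $CD^{d}Z^{\pi}B$ by $A^{d}$ preserves vanishing. The corollary simply makes explicit that the hypotheses used in earlier papers (such as those in \cite{Cvetkovic (2012)}) are a strictly stronger package than the hypothesis of Theorem~\ref{ApiCDdB=0}.
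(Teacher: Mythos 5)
Your proposal is correct and matches the paper's intent: the corollary is stated as an immediate consequence of Theorem~\ref{ApiCDdB=0}, and the only verification needed is exactly the one you give, namely that $CD^{\pi}Z^{d}B=0$ and $CD^{d}Z^{\pi}B=0$ force $KD^{\pi}Z^{d}H=A^{d}(CD^{\pi}Z^{d}B)A^{d}=0=A^{d}(CD^{d}Z^{\pi}B)A^{d}=KD^{d}Z^{\pi}H$. The paper omits the proof entirely, so your write-up simply makes explicit the same one-line specialization.
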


\begin{corollary}
 If $A^{\pi}CD^{d}B=0$ and $D^{\pi}=Z^{\pi}$, then
$$
S^{d}=A^{d}+A^{d}CZ^{d}BA^{d}-\sum_{i=0}^{k-1}(A^{d}+A^{d}CZ^{d}BA^{d})^{i+1}A^{d}CZ^{d}BA^{i}A^{\pi},
$$
where
$k=\ind(A)$.
\end{corollary}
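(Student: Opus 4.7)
The plan is to derive this corollary directly from Theorem \ref{ApiCDdB=0} by showing that, under the hypothesis $D^{\pi}=Z^{\pi}$, the second sum in the theorem's second display vanishes and the remaining hypothesis $KD^{\pi}Z^{d}H=KD^{d}Z^{\pi}H$ is automatically satisfied. So no new induction or spectral argument is needed; everything reduces to algebraic manipulations with the identities $Z^{d}ZZ^{d}=Z^{d}$ and $D^{d}DD^{d}=D^{d}$.

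First I would verify the auxiliary hypothesis of Theorem \ref{ApiCDdB=0}. Using $D^{\pi}=Z^{\pi}$, rewrite
\[
KD^{\pi}Z^{d}H=KZ^{\pi}Z^{d}H=K(I-ZZ^{d})Z^{d}H=K(Z^{d}-Z^{d})H=0,
\]
and symmetrically
\[
KD^{d}Z^{\pi}H=KD^{d}D^{\pi}H=K(D^{d}-D^{d})H=0,
\]
so the two sides coincide (both vanish). Combined with the assumption $A^{\pi}CD^{d}B=0$, this lets me invoke Theorem \ref{ApiCDdB=0} and conclude
\begin{multline*}
S^{d}=A^{d}+A^{d}CZ^{d}BA^{d}-\sum_{i=0}^{k-1}(A^{d}+A^{d}CZ^{d}BA^{d})^{i+1}A^{d}CZ^{d}BA^{i}A^{\pi}\\
+\sum_{i=0}^{k-1}(A^{d}+A^{d}CZ^{d}BA^{d})^{i+1}A^{d}C(Z^{d}D^{\pi}-Z^{\pi}D^{d})BA^{i}.
\end{multline*}

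Next I would show the last sum is identically zero. Using $D^{\pi}=Z^{\pi}$ once more,
\[
Z^{d}D^{\pi}-Z^{\pi}D^{d}=Z^{d}Z^{\pi}-D^{\pi}D^{d}=0-0=0,
\]
so every summand in the second sum contains the factor $A^{d}C\cdot 0\cdot BA^{i}=0$. The formula therefore collapses to
\[
S^{d}=A^{d}+A^{d}CZ^{d}BA^{d}-\sum_{i=0}^{k-1}(A^{d}+A^{d}CZ^{d}BA^{d})^{i+1}A^{d}CZ^{d}BA^{i}A^{\pi},
\]
which is exactly the claim. There is no genuine obstacle: the only subtlety worth flagging is being careful about the order of the factors in $Z^{d}D^{\pi}$ and $D^{\pi}Z^{d}$, since $D$ and $Z$ need not commute in general; here, though, the identification $D^{\pi}=Z^{\pi}$ allows us to replace the troublesome factor with either $Z^{\pi}$ or $D^{\pi}$ and then use the absorbing identities $Z^{d}Z^{\pi}=0$ and $D^{\pi}D^{d}=0$ directly.
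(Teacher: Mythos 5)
Your proof is correct and follows exactly the route the paper intends: the corollary is stated without proof as an immediate consequence of Theorem \ref{ApiCDdB=0}, and your verification that $D^{\pi}=Z^{\pi}$ forces $KD^{\pi}Z^{d}H=0=KD^{d}Z^{\pi}H$ (so the theorem's hypothesis holds) and $Z^{d}D^{\pi}-Z^{\pi}D^{d}=0$ (so the last sum vanishes) is precisely the intended argument.
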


The following theorem can be proved similar to Theorem \ref{ApiCDdB=0}.
\begin{theorem}\label{CDdBApi=0}
 If $CD^{d}BA^{\pi}=0$ and $KZ^{\pi}D^{d}H=KZ^{d}D^{\pi}H$, then
$$
  S^{d}=A^{d}+KZ^{d}H+\sum_{i=0}^{k-1}A^{i}A^{\pi}S(A^{d}+KZ^{d}H)^{i+2},
$$
or alternatively
\begin{multline*}
  S^{d}=A^{d}+A^{d}CZ^{d}BA^{d}-\sum_{i=0}^{k-1}A^{\pi}A^{i}CZ^{d}BA^{d}(A^{d}+A^{d}CZ^{d}BA^{d})^{i+1}\\
  +\sum_{i=0}^{k-1}A^{i}C(D^{\pi}Z^{d}-D^{d}Z^{\pi})BA^{d}(A^{d}+A^{d}CZ^{d}BA^{d})^{i+1},
\end{multline*}
where
$k= \ind(A) $.
\end{theorem}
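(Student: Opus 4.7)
The plan is to mirror the proof of Theorem~\ref{ApiCDdB=0}, interchanging the roles of the left and right $A^{\pi}$-factors. Since $CD^{d}BA^{\pi}=0$ is the left--right dual of $A^{\pi}CD^{d}B=0$, the natural decomposition becomes $S = A^{\pi}S + S_{A}$ with $S_{A}:=AA^{d}S$, and I expect to apply Lemma~\ref{P+Q} with the roles of $P$ and $Q$ swapped from before.

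First I would observe that the hypothesis gives $CD^{d}B = CD^{d}B\cdot AA^{d}$, which forces $AA^{d}S = AA^{d}SAA^{d}$, so $S_{A}$ agrees with the $S_{A}$ of Lemma~\ref{(lemma) Qd}. Then I would compute $SA^{\pi}=AA^{\pi}$ (the $CD^{d}BA^{\pi}$ term vanishes), deduce $A^{\pi}SA^{\pi}=AA^{\pi}$, and prove by induction that $(A^{\pi}S)^{i}=A^{i-1}A^{\pi}S$ for $i\geq 1$, which makes $A^{\pi}S$ nilpotent of index at most $k+1$. A short computation using $AA^{d}\cdot AA^{\pi}=0$ yields $S_{A}\cdot A^{\pi}S=0$, so Lemma~\ref{P+Q} applies with $P=S_{A}$ and $Q=A^{\pi}S$; since $Q^{d}=0$ and $Q^{\pi}=I$ it collapses to
\[
  S^{d} \;=\; \sum_{i=0}^{t-1}(A^{\pi}S)^{i}(S_{A}^{d})^{i+1} \;=\; S_{A}^{d} + \sum_{j=0}^{k-1} A^{j}A^{\pi}S(S_{A}^{d})^{j+2}.
\]
Lemma~\ref{(lemma) Qd}, with its condition~(4) being exactly the stated hypothesis, then gives $S_{A}^{d}=S_{A}^{\#}=A^{d}+KZ^{d}H$, yielding the first displayed formula.

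For the alternative form I would expand $A^{\pi}S(A^{d}+KZ^{d}H)$, using $A^{\pi}AA^{d}=0$ to kill the $A^{d}$-heads and $BA^{d}C=D-Z$ to reshape the mixed term, and then unfolding $D^{d}D$ and $ZZ^{d}$ through $D^{\pi}$ and $Z^{\pi}$; the cancellations leave
\[
  A^{\pi}S(A^{d}+KZ^{d}H) \;=\; -A^{\pi}CZ^{d}H + A^{\pi}C(D^{\pi}Z^{d}-D^{d}Z^{\pi})H.
\]
The $A^{\pi}$ in the second summand can then be dropped: the hypothesis, by Lemma~\ref{(lemma) Qd}, is equivalent to $K(D^{\pi}Z^{d}-D^{d}Z^{\pi})H=0$, i.e.\ $AA^{d}\cdot C(D^{\pi}Z^{d}-D^{d}Z^{\pi})H=0$, so only the $A^{\pi}$-component contributes. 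Substituting into the first form and using $A^{j}A^{\pi}=A^{\pi}A^{j}$ produces the second displayed formula. The only step requiring real care is the collapse in the last display; everything else is a mechanical mirror of Theorem~\ref{ApiCDdB=0}, which is exactly why the authors can say it is ``proved similarly.''
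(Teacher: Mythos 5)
Your proposal is correct and is precisely the left--right dual of the proof of Theorem~\ref{ApiCDdB=0} that the paper intends when it says the result ``can be proved similar to'' that theorem: the decomposition $S=A^{\pi}S+AA^{d}S$ with Lemma~\ref{P+Q} applied to $P=S_{A}$, $Q=A^{\pi}S$, Lemma~\ref{(lemma) Qd}(4) for $S_{A}^{\#}$, and the expansion of $A^{\pi}S(A^{d}+KZ^{d}H)$ all check out, including the step where the hypothesis (via condition (1) of Lemma~\ref{(lemma) Qd}) lets you drop the $A^{\pi}$ in the $C(D^{\pi}Z^{d}-D^{d}Z^{\pi})H$ term. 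No gaps.
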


Theorem \ref{CDdBApi=0} generalizes \cite[Theorem 2.1]{Cvetkovic (2012)},   \cite[Theorem 3]{Dijana 2013}, \cite[Theorem~2.2]{Dopazo (2013)}, \cite[Theorem 2.2]{Shakoor} and \cite[Theorem 2.1]{Wei (2002)}.

The following lemma is an immediate corollary of \cite[Corollary 3.2]{Chen (2008)}.
\begin{lemma}\label{(lemma)Chen Xu Wei(2008)}
 Let $P,~Q$ and $R$ be $n\times n$ matrices such that $PQ=QP=QR=RP=R^{2}=0$. If $Q$ is nilpotent,
 then
 $$
   (P+Q+R)^{d}=P^{d}+\sum_{i=0}^{t-1}(P^{d})^{i+2}RQ^{i},
 $$
  where $t=\ind(Q)$.
\end{lemma}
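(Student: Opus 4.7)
The plan is to derive this lemma from Lemma \ref{P+Q} by two successive applications, exploiting that $Q$ is nilpotent by hypothesis and that $R$ is nilpotent (of index $\le 2$) since $R^{2}=0$. This keeps the argument self-contained within the lemmas of this section, rather than invoking \cite{Chen (2008)} directly.

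First I split $P+Q+R = Q + (P+R)$. The hypotheses $QP=0$ and $QR=0$ yield $Q(P+R)=0$, so Lemma \ref{P+Q} applies with first summand $Q$ and second summand $P+R$. Since $Q^{d}=0$ and $Q^{\pi}=I$, the general Hartwig formula collapses to
$$
(P+Q+R)^{d} \;=\; \sum_{i=0}^{t-1}\bigl((P+R)^{d}\bigr)^{i+1}Q^{i},\qquad t=\ind(Q).
$$
Next I compute $(P+R)^{d}$ from the split $R+P$; note the order is forced, because only $RP=0$ is given, not $PR=0$. Lemma \ref{P+Q} applied with first summand $R$ (which is nilpotent, with $R^{d}=0$, $R^{\pi}=I$, $\ind(R)\le 2$) yields
$$
(P+R)^{d}\;=\;P^{d}+(P^{d})^{2}R.
$$

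Substituting into the previous display and simplifying requires two commutation identities: $P^{d}Q=0$ follows from $P^{d}=(P^{d})^{2}P$ together with $PQ=0$, while $RP^{d}=0$ follows from $P^{d}=P(P^{d})^{2}$ together with $RP=0$. A short induction using $RP^{d}=0$ gives $\bigl((P+R)^{d}\bigr)^{i+1}=(P^{d})^{i+1}+(P^{d})^{i+2}R$ for all $i\ge 0$. Inserting this and separating the two resulting sums, the identity $P^{d}Q=0$ annihilates every term of $\sum_{i=0}^{t-1}(P^{d})^{i+1}Q^{i}$ except the $i=0$ term $P^{d}$, while the remaining sum is precisely $\sum_{i=0}^{t-1}(P^{d})^{i+2}RQ^{i}$. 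This yields the claimed formula. The only point requiring care is the ordering of summands in each application of Lemma \ref{P+Q}, since the hypotheses $PQ=0$, $QP=0$, $QR=0$, $RP=0$ are not symmetric; beyond that the argument is purely mechanical and there is no essential obstacle.
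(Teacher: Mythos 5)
Your proof is correct, but it takes a genuinely different route from the paper: the paper offers no argument at all for this lemma, stating only that it is ``an immediate corollary of \cite[Corollary 3.2]{Chen (2008)}'', an external result on the Drazin inverse of a four-term sum. You instead derive it entirely from Lemma \ref{P+Q}, which is already quoted in the paper. Your two applications of that lemma are set up with the correct orientation each time: $Q(P+R)=QP+QR=0$ justifies the outer split and, since $Q^{d}=0$ and $Q^{\pi}=I$, collapses Hartwig's formula to $\sum_{i=0}^{t-1}\bigl((P+R)^{d}\bigr)^{i+1}Q^{i}$ with $t=\ind(Q)$; then $RP=0$ with $R^{2}=0$ gives $(P+R)^{d}=P^{d}+(P^{d})^{2}R$ (this also covers the degenerate case $R=0$, where the extra term vanishes anyway). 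The two auxiliary identities $P^{d}Q=(P^{d})^{2}PQ=0$ and $RP^{d}=RP(P^{d})^{2}=0$ are exactly what is needed: the second drives the induction $\bigl((P+R)^{d}\bigr)^{i+1}=(P^{d})^{i+1}+(P^{d})^{i+2}R$, and the first kills every term of $\sum_{i=0}^{t-1}(P^{d})^{i+1}Q^{i}$ except $P^{d}$. All five hypotheses are used and the final expression matches the statement. What your approach buys is self-containedness --- the section then rests only on Lemma \ref{P+Q} and Lemma \ref{ECE} rather than on an additional external reference --- at the cost of about half a page of routine computation that the paper avoids by citation.
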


Let $H=BA^{d},~K=A^{d}C$ and $\Gamma=HK$, which were introduced by Wei \cite{Wei (1998)Expressions}  to give representations for the Drazin inverses of block matrices.

\begin{lemma}\label{Sd=ESEd+X}
 If $A^{\pi}CD^{d}B=0$ and $K\Gamma^dHSA^d=0$,
 then
 $$
 S^d=(ESE)^{d}+\sum_{i=0}^{t-1}((ESE)^{d})^{i+2}(SE^{\pi})^{i+1},
$$
where  $E=AA^{d}-K\Gamma^{d}H$ and $t=\ind(E^\pi SE^\pi)$.
\end{lemma}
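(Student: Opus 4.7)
The plan is to decompose $S$ as a sum $P+Q$ where $P=ESE$ and $Q=SE^{\pi}$, verify that $QP=0$ and that $Q$ is nilpotent, then apply Hartwig's Lemma~\ref{P+Q} with the roles of $P$ and $Q$ interchanged (so that its hypothesis $PQ=0$ becomes our $QP=0$).

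The easy preliminary step is to check that $E$ is idempotent. Expanding $E^{2}=(AA^{d}-K\Gamma^{d}H)^{2}$ uses only $AA^{d}K=K$, $HAA^{d}=H$, and $\Gamma^{d}\Gamma\Gamma^{d}=\Gamma^{d}$, so $E^{2}=E$ falls out directly. In particular $E^{\pi}E=0$, whence $QP=S(E^{\pi}E)SE=0$ is automatic.

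The crux is to prove $E^{\pi}SE=0$, which combined with the expansion $S=ESE+ESE^{\pi}+E^{\pi}SE+E^{\pi}SE^{\pi}$ gives the desired decomposition $S=ESE+SE^{\pi}=P+Q$. Since $E^{\pi}=A^{\pi}+K\Gamma^{d}H$, I would handle the two pieces separately. For $A^{\pi}SE$, the hypothesis $A^{\pi}CD^{d}B=0$ yields $A^{\pi}S=A^{\pi}A$, and then $A^{\pi}AE=A^{\pi}A^{2}A^{d}-A^{\pi}AK\Gamma^{d}H=0$ because $A^{\pi}A^{d}=0$ and $A^{\pi}AA^{d}=0$. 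For $K\Gamma^{d}HSE$, multiplying the hypothesis $K\Gamma^{d}HSA^{d}=0$ on the right by $A$ gives $K\Gamma^{d}HSAA^{d}=0$, and multiplying on the right by $C\Gamma^{d}H$ (using $A^{d}C=K$) gives $K\Gamma^{d}HSK\Gamma^{d}H=0$; combining these, $K\Gamma^{d}HSE=K\Gamma^{d}HS(AA^{d}-K\Gamma^{d}H)=0$. This step is where both hypotheses are essential, and it is the main obstacle.

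Finally, from the identity $(E^{\pi}SE^{\pi})^{n}=E^{\pi}(SE^{\pi})^{n}$ (which uses $E^{\pi}$ idempotent, a consequence of $E^{2}=E$), nilpotence of $E^{\pi}SE^{\pi}$ at index $t$ forces $(SE^{\pi})^{t+1}=S\cdot E^{\pi}(SE^{\pi})^{t}=0$; conversely $(SE^{\pi})^{s}=0$ gives $(E^{\pi}SE^{\pi})^{s}=0$, so $s:=\ind(SE^{\pi})\in\{t,t+1\}$. Hartwig's formula in its swapped form, with $Q^{d}=0$ and $Q^{\pi}=I$, collapses to $S^{d}=(P+Q)^{d}=\sum_{i=0}^{s-1}((ESE)^{d})^{i+1}(SE^{\pi})^{i}$. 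Separating the $i=0$ term and re-indexing produces $(ESE)^{d}+\sum_{j=0}^{s-2}((ESE)^{d})^{j+2}(SE^{\pi})^{j+1}$; since $(SE^{\pi})^{j+1}=0$ for $j+1\ge s$, the upper limit may be freely extended to $t-1$ without changing the value, matching the stated formula.
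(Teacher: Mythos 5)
Your architecture is legitimate and in fact differs from the paper's: the paper splits $S=ESE+E^{\pi}SE^{\pi}+ESE^{\pi}$ and applies the Chen--Xu--Wei three-term formula (Lemma \ref{(lemma)Chen Xu Wei(2008)}), whereas you split $S=ESE+SE^{\pi}$ and apply Hartwig's two-term formula with the factors swapped, much as the paper itself does in Lemma \ref{(lemma) P nilpotent}. Your verifications of $E^{2}=E$, of $QP=0$, of $E^{\pi}SE=0$ (correctly using both hypotheses: $A^{\pi}SE=A^{\pi}AE=0$ and $K\Gamma^{d}HSE=K\Gamma^{d}HS(AA^{d}-K\Gamma^{d}H)=0$), and of the bookkeeping between $\ind(SE^{\pi})$ and $\ind(E^{\pi}SE^{\pi})$ are all sound.

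However, there is one genuine gap: you announce in your plan that you will ``verify that $Q=SE^{\pi}$ is nilpotent,'' but you never do. Your final paragraph deduces $(SE^{\pi})^{t+1}=0$ from ``nilpotence of $E^{\pi}SE^{\pi}$ at index $t$,'' and that nilpotence is exactly the unproved claim. It is not a formality: $t=\ind(E^{\pi}SE^{\pi})$ is defined for an arbitrary matrix, and if $E^{\pi}SE^{\pi}$ were not nilpotent then $(SE^{\pi})^{d}\neq 0$, the first sum in Hartwig's formula would not vanish, and the stated expression for $S^{d}$ would be false. Moreover the nilpotence genuinely depends on both hypotheses. To close the gap: since $E^{\pi}SE=0$ one has $E^{\pi}SE^{\pi}=E^{\pi}S=A^{\pi}S+K\Gamma^{d}HS=AA^{\pi}+K\Gamma^{d}HS$, using $A^{\pi}CD^{d}B=0$ for the last equality. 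Now $A^{\pi}K=0$ gives $AA^{\pi}\cdot K\Gamma^{d}HS=0$, and $K\Gamma^{d}HSA^{d}=0$ gives $K\Gamma^{d}HSK=0$, hence $(K\Gamma^{d}HS)^{2}=0$; therefore $(AA^{\pi}+K\Gamma^{d}HS)^{n}=(AA^{\pi}+K\Gamma^{d}HS)(AA^{\pi})^{n-1}$, which vanishes for $n>\ind(A)$. With this paragraph inserted, your proof is complete.
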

\begin{proof}
Since $A^{\pi}CD^{d}B=0$, we have $A^{\pi}SE=A^\pi AE=0$. Thus
    $$E^{\pi}SE=(A^{\pi}+K\Gamma^{d}H)SE=K\Gamma^{d}HSE=0$$ and
$$
 E^{\pi}SE^{\pi}=E^{\pi}S=A^{\pi}S+K\Gamma^{d}HS=A A^{\pi}+K\Gamma^{d}HS.
$$
Since $A^{\pi} K=0$, $(K\Gamma^{d}HS)^2=0$ and $AA^{\pi}$ is nilpotent, we see that $E^{\pi}SE^{\pi}$ is nilpotent.
Now we use Lemma \ref{(lemma)Chen Xu Wei(2008)} to
$  S=ESE+E^{\pi}SE^{\pi}+ESE^{\pi}$ to obtain
      \begin{align*}
       S^{d}&=(ESE)^{d}+\sum_{i=0}^{t-1}((ESE)^{d})^{i+2}SE^{\pi}(E^{\pi}SE^{\pi})^{i}\\
      &=(ESE)^{d}+\sum_{i=0}^{t-1}((ESE)^{d})^{i+2}(SE^{\pi})^{i+1},
      \end{align*}
where $t=\ind(E^{\pi}SE^{\pi})$.
\end{proof}

\begin{lemma}\label{ESEd=EAdE}
 If $K\Gamma^{d}HSA^d=0$ and $K\Gamma^{\pi}D^{d}H=0$, then $ESE$ has the group inverse  and
$  (ESE)^{\#}=EA^{d}E
$, where $E=AA^{d}-K\Gamma^{d}H$.
\end{lemma}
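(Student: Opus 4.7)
The plan is to establish something stronger than the claim: namely, that $EA^dE$ is a genuine two-sided multiplicative inverse of $ESE$ inside the corner subalgebra $EM_n(\mathbb{C})E$, from which the group-inverse formula $(ESE)^\# = EA^dE$ follows at once. The efficient route will be to verify only one of the two inverse identities by direct computation and then invoke Lemma~\ref{ECE} to obtain the other for free.

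First I would record the structural identities for $E$. Direct calculations give $E^2 = E$, $EAA^d = AA^dE = E$, $A^\pi E = EA^\pi = 0$, $EK = K\Gamma^\pi$, and $HE = \Gamma^\pi H$; and the complementary piece $F := AA^d - E = K\Gamma^d H$ is an idempotent orthogonal to $E$. I would then put each hypothesis into a form adapted to the corner. The first hypothesis $K\Gamma^d H S A^d = 0$ reads $FSA^d = 0$; multiplying on the right by $A$ and using $A^dA = AA^d$ gives $FS\cdot AA^d = 0$, and $AA^d E = E$ then yields $FSE = 0$. The same trick applied to $K\Gamma^\pi D^d H = K\Gamma^\pi D^d B A^d = 0$ gives $K\Gamma^\pi D^d B\cdot AA^d = 0$ and therefore $K\Gamma^\pi D^d BE = 0$.

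With these two reductions in hand, the central calculation is short. Writing the middle copy of $E$ as $AA^d - F$ and using $E^2 = E$, $EA^d\cdot AA^d = EA^d$, and $FSE = 0$, one obtains
\[
(EA^dE)(ESE) \;=\; EA^d\,E\,SE \;=\; EA^d(AA^d - F)SE \;=\; EA^d SE.
\]
Now $SE = AE - CD^d BE$. The first term gives $EA^d AE = E\cdot AA^d\cdot E = E$, while the second is $EA^d CD^d BE = (EK)D^d BE = K\Gamma^\pi D^d BE = 0$ by the second reduction. Hence $(EA^dE)(ESE) = E$.

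Finally I would apply Lemma~\ref{ECE} with $X = EA^dE$, $Y = ESE$, and $e = E$: the required input $XeY = (EA^dE)(ESE) = E$ has just been verified, so the lemma produces $eYeXe = (ESE)(EA^dE) = E$ automatically. Thus $EA^dE$ and $ESE$ multiply in either order to the identity $E$ of $EM_n(\mathbb{C})E$, and the three defining relations of the group inverse---$(ESE)(EA^dE)(ESE) = ESE$, $(EA^dE)(ESE)(EA^dE) = EA^dE$, and $(ESE)(EA^dE) = (EA^dE)(ESE)$---follow by direct substitution, yielding $(ESE)^\# = EA^dE$. The main subtlety is spotting the two reductions $FSE = 0$ and $K\Gamma^\pi D^d BE = 0$; after those, everything collapses into one short chain of equalities plus a single appeal to Lemma~\ref{ECE}.
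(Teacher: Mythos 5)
Your proposal is correct and follows essentially the same route as the paper: both verify the single identity $(EA^dE)(ESE)=E$ by a direct computation (absorbing the hypotheses via the left factor $A^d$ in $E$, which is what your reductions $FSE=0$ and $K\Gamma^{\pi}D^dBE=0$ amount to) and then invoke Lemma~\ref{ECE} to get $(ESE)(EA^dE)=E$ and hence the group inverse. The only differences are presentational.
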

\begin{proof}
Note that
\begin{equation}\label{eq:EK}
  EK=K-K\Gamma^{d}\Gamma=K\Gamma^{\pi}
\end{equation}
and $E=A^d(A-C\Gamma^dH)$. Then
\begin{multline*}
      EA^{d}ESE=EA^{d}SE-EA^{d}K\Gamma^{d}HSE\\
               =E(A^{e}-KD^{d}B)E
               =E-EKD^{d}BE
               =E-K\Gamma^\pi D^{d}BE
               =E.
\end{multline*}
It follows from Lemma \ref{ECE} that  $ESEA^{d}E=E$. Thus $ESE$ has the group inverse $EA^{d}E$.
\end{proof}

\begin{theorem}\label{Sd=(EAdEd),ApiCDdH=0}
  If $A^{\pi}CD^{d}B=0,~K\Gamma^{d}HSA^{d}=0$ and $K\Gamma^{\pi}D^{d}H=0$, then
\begin{multline*}
  S^{d}=(I-K\Gamma^{d}H)A^{d}(I-K\Gamma^{d}H)-\sum_{i=0}^{k-1}((I-K\Gamma^{d}H)A^{d})^{i+2}K\Gamma^{d}HSA^{i}\\
  -\sum_{i=0}^{k-1}((I-K\Gamma^{d}H)A^{d})^{i+1}K\Gamma^{\pi}D^{d}BA^{i},
\end{multline*}
where $H=BA^{d},~K=A^{d}C,~\Gamma=HK$ and $k=\ind(A)$.
\end{theorem}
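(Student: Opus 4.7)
My plan is to combine Lemma~\ref{Sd=ESEd+X} with Lemma~\ref{ESEd=EAdE}. The three hypotheses of the theorem are precisely the union of the assumptions required by those two lemmas, so both apply. Lemma~\ref{ESEd=EAdE} provides $(ESE)^{\#}=EA^{d}E$ with $E=AA^{d}-K\Gamma^{d}H$; since a group inverse, when it exists, coincides with the Drazin inverse, substituting into Lemma~\ref{Sd=ESEd+X} yields
$$S^{d}=EA^{d}E+\sum_{i=0}^{t-1}(EA^{d}E)^{i+2}(SE^{\pi})^{i+1},\qquad t=\ind(E^{\pi}SE^{\pi}).$$

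A short calculation using $AA^{d}K=K$, $HAA^{d}=H$, and $HK=\Gamma$ shows $E^{2}=E$; hence $E^{\pi}=I-E=A^{\pi}+K\Gamma^{d}H$ and $EA^{d}=(I-K\Gamma^{d}H)A^{d}$. In particular $EA^{d}E=(I-K\Gamma^{d}H)A^{d}(I-K\Gamma^{d}H)$, which is the leading term of the claim, and $(EA^{d}E)^{n}=((I-K\Gamma^{d}H)A^{d})^{n}E$ for $n\ge 1$, so every summand takes the shape $((I-K\Gamma^{d}H)A^{d})^{i+2}\,E(SE^{\pi})^{i+1}$; this already reproduces the $((I-K\Gamma^{d}H)A^{d})^{i+\ast}$ prefactors of the two sums.

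The core of the work is then the expansion of $E(SE^{\pi})^{i+1}$. I would write $SE^{\pi}=SA^{\pi}+SK\Gamma^{d}H$ and exploit three consequences of the hypotheses. The assumption $A^{\pi}CD^{d}B=0$ gives $A^{\pi}S=A^{\pi}A$, whence $SA^{\pi}\cdot SK\Gamma^{d}H=0$ via $K=A^{d}C$ together with $A^{\pi}AA^{d}=0$; the assumption $K\Gamma^{d}HSA^{d}=0$ gives $K\Gamma^{d}HS\cdot K=0$ and hence $(SK\Gamma^{d}H)^{2}=(K\Gamma^{d}HS)^{2}=0$; finally, the identity $SK=AK-CD^{d}(D-Z)$ combined with $K\Gamma^{\pi}D^{d}H=0$ produces the $K\Gamma^{\pi}D^{d}B$ factor. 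Induction on $i$ using these relations collapses $(SE^{\pi})^{i+1}$ into two homogeneous pieces, one carrying a $K\Gamma^{d}HSA^{i-1}A^{\pi}$-type tail and one carrying a $K\Gamma^{\pi}D^{d}BA^{i}$-type tail; after the $E$ on the left is absorbed and the identity $(I-K\Gamma^{d}H)A^{d}\cdot A^{j}A^{\pi}=0$ (which follows from $A^{d}A^{\pi}=0$) is used to drop trailing $A^{\pi}$ factors, one obtains exactly the two explicit sums of the theorem. Truncation of the sum from $t-1$ to $k-1$ is harmless because the proof of Lemma~\ref{Sd=ESEd+X} in fact gives $t\le k+1$, combined with $A^{k}A^{\pi}=0$.

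The main obstacle is this last piece of combinatorial bookkeeping: each of the three hypotheses plays a distinct and non-interchangeable role, and the most delicate moment is the emergence of the $K\Gamma^{\pi}D^{d}B$ factor, which only materialises after expanding $SK$ and invoking $K\Gamma^{\pi}D^{d}H=0$. Once that expansion is in hand, the index shift between $(i+2)$ and $(i+1)$ in the two sums is accounted for automatically by the way $E$ distributes over the two tails.
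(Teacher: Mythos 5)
Your proposal follows essentially the same route as the paper: it invokes Lemma~\ref{Sd=ESEd+X} together with Lemma~\ref{ESEd=EAdE}, uses the idempotency of $E=AA^{d}-K\Gamma^{d}H$ to rewrite $(EA^{d}E)^{n}$ as $((I-K\Gamma^{d}H)A^{d})^{n}E$, and collapses $(SE^{\pi})^{i+1}$ via exactly the relations the paper exploits ($A^{\pi}S=A^{\pi}A$, $K\Gamma^{d}HSK=0$, $(K\Gamma^{d}HS)^{2}=0$, $EK=K\Gamma^{\pi}$), with the same truncation from $t$ to $k$. The only misstep is the stated mechanism for discarding the trailing $A^{\pi}$ factors: the identity $(I-K\Gamma^{d}H)A^{d}A^{j}A^{\pi}=0$ sits on the wrong side of those tails and does nothing; the $A^{\pi}$'s actually drop because $K\Gamma^{d}HSAA^{d}=0$ and $K\Gamma^{\pi}D^{d}BAA^{d}=K\Gamma^{\pi}D^{d}HA=0$, so each tail $X$ already satisfies $XA^{\pi}=X$ --- a local fix, since both facts follow from the hypotheses you have already listed.
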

\begin{proof}
Let $E=AA^{d}-K\Gamma^{d}H$. By Lemma \ref{Sd=ESEd+X} and Lemma \ref{ESEd=EAdE} we have
\begin{align}
   S^{d}&=EA^{d}E+\sum_{i=0}^{t-1}(EA^{d}E)^{i+2}(SE^{\pi})^{i+1}\nonumber\\
&=EA^{d}E+\sum_{i=0}^{t-1}(EA^{d}E)^{i+1}(EA^{d}ESE^{\pi})(E^{\pi}SE^{\pi})^{i},\label{eq:Th2a}
\end{align}
where $t=\ind(E^{\pi}SE^\pi)$.
Note that $EA^{d}ESE^{\pi}=EA^{d}ESA^{\pi}+EA^{d}ESK\Gamma^{d}H$. Since $K\Gamma^{d}HSA^{\pi}=K\Gamma^{d}HS$, we have by \eqref{eq:EK}
\begin{align*}
EA^{d}ESA^{\pi}&=EA^{d}(AA^{d}SA^{\pi}-K\Gamma^{d}HSA^{\pi})\\
&=EA^{d}(-CD^{d}BA^{\pi}-K\Gamma^{d}HS)\\
&=-EKD^{d}BA^{\pi}-EA^{d}K\Gamma^{d}HS\\
&=-K\Gamma^{\pi}D^{d}B-EA^{d}K\Gamma^{d}HS.
\end{align*}
Since $K\Gamma^dHSK=0$, we have \begin{align*}
   EA^{d}ESK\Gamma^{d}H&=EA^{d}(A^{e}-K\Gamma^{d}H)SK\Gamma^{d}H\\
   &=EA^{d}SK\Gamma^{d}H\\
   &=EA^{d}(A-CD^dB)K\Gamma^{d}H\\
   &=EK\Gamma^{d}H-EKD^{d}HC\Gamma^{d}H\\
   &=K\Gamma^{\pi}\Gamma^{d}H-K\Gamma^{\pi}D^{d}HC\Gamma^{d}H\\
   &=0.
\end{align*}
Then
$EA^{d}ESE^{\pi}=-K\Gamma^{\pi}D^{d}B-EA^{d}K\Gamma^{d}HS$.
Note that $ E^{\pi}SE^{\pi}=A A^{\pi}+K\Gamma^{d}HS$.
Since $A^\pi K=0$ and $(K\Gamma^dHS)^2=0$, we have $$(E^{\pi}SE^{\pi})^{i}=(A^{\pi}A+K\Gamma^{d}HS)A^{i-1},$$
for any positive integer $i$, whence $\ind (A)\leq\ind(E^{\pi}SE^{\pi})\leq \ind(A)+1$.
It follows that
\begin{align}
  EA^{d}ESE^{\pi}(E^{\pi}SE^{\pi})^{i}=&(-K\Gamma^{\pi}D^{d}B-EA^{d}K\Gamma^{d}HS)(A^{\pi}A+K\Gamma^{d}HS)A^{i-1}\nonumber\\
            =&-K\Gamma^{\pi}D^{d}BA^{i}-EA^{d}K\Gamma^{d}HSA^{i},\label{eq:Th2b}
\end{align}
for any positive integer $i$.
Combining \eqref{eq:Th2a} and \eqref{eq:Th2b} yields
$$
  S^{d}=EA^{d}E-\sum_{i=0}^{t-1}(EA^{d})^{i+2}K\Gamma^{d}HSA^{i}-\sum_{i=0}^{t-1}(EA^{d})^{i+1}K\Gamma^{\pi}D^{d}BA^{i}.
$$
Since $t-1\leq k\leq t$ and $K\Gamma^{d}HSA^{i}=K\Gamma^{\pi}D^{d}BA^{i}=0$ for any $i\geq k$, we have
$$
  S^{d}=EA^{d}E-\sum_{i=0}^{k-1}(EA^{d})^{i+2}K\Gamma^{d}HSA^{i}-\sum_{i=0}^{k-1}(EA^{d})^{i+1}K\Gamma^{\pi}D^{d}BA^{i}.
$$
Now the conclusion follows from the fact that $EA^{d}=(I-K\Gamma^{d}H)A^{d}$ and $A^{d}E=A^{d}(I-K\Gamma^{d}H)$.
\end{proof}

Theorem \ref{Sd=(EAdEd),ApiCDdH=0} generalizes \cite[Theorem 2.2]{Cvetkovic (2012)}, \cite[Theorem 2.8]{Shakoor} and \cite[Theorem 2.2]{Wei (2002)}.

\begin{corollary}
 If $A^{\pi}CD^{d}B=0,~C\Gamma^{d}ZD^{d}B=0,~C\Gamma^{d}D^{\pi}B=0$ and $C\Gamma^{\pi}D^{d}B=0$, then
$$
  S^{d}=(I-K\Gamma^{d}H)A^{d}(I-K\Gamma^{d}H)+\sum_{i=0}^{k-1}((I-K\Gamma^{d}H)A^{d})^{i+2}K\Gamma^{d}BA^{i}A^{\pi},
$$
where $k=\ind(A)$.
\end{corollary}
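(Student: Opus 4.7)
The plan is to deduce this corollary from Theorem~\ref{Sd=(EAdEd),ApiCDdH=0}. First I verify that the three hypotheses of that theorem hold under the four stated assumptions. The condition $A^\pi CD^d B=0$ is given verbatim, and
\[
K\Gamma^\pi D^d H = A^d(C\Gamma^\pi D^d B)A^d = 0
\]
is immediate from $C\Gamma^\pi D^d B=0$ together with $K=A^d C$ and $H=BA^d$.

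The remaining hypothesis $K\Gamma^d H S A^d=0$ requires a short calculation. Expanding,
\[
K\Gamma^d H S A^d = A^d C\Gamma^d B A^d(A-CD^d B)A^d = A^d C\Gamma^d B A^d - A^d C\Gamma^d(BA^d C)D^d B A^d.
\]
I then substitute $BA^d C = D-Z$ into the second term: the piece containing $Z$ vanishes by $C\Gamma^d Z D^d B=0$, while the piece containing $D$ simplifies via $DD^d = I-D^\pi$ to $A^d C\Gamma^d B A^d - A^d C\Gamma^d D^\pi B A^d$, whose subtracted summand is killed by $C\Gamma^d D^\pi B=0$. These cancellations leave $K\Gamma^d H S A^d=0$.

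With Theorem~\ref{Sd=(EAdEd),ApiCDdH=0} now applicable, its formula has three pieces. The last sum, whose general term contains $K\Gamma^\pi D^d B A^i = A^d(C\Gamma^\pi D^d B)A^{i-1}\cdot A$-type expressions, vanishes termwise from $C\Gamma^\pi D^d B=0$. For the middle sum I repeat the substitution trick on $K\Gamma^d H S A^i$: using $A^d A A^i = A^i A A^d$, the identity $BA^d C = D-Z$, and the same two cancellations driven by $C\Gamma^d Z D^d B=0$ and $C\Gamma^d D^\pi B=0$, the expression collapses to
\[
K\Gamma^d H S A^i = A^d C\Gamma^d B A^i(AA^d - I) = -K\Gamma^d B A^i A^\pi.
\]
The minus sign from this computation cancels the minus sign in front of the middle sum in Theorem~\ref{Sd=(EAdEd),ApiCDdH=0}, producing precisely the stated formula.

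I expect the main obstacle to be clerical rather than conceptual: one must apply the substitution pattern $BA^d C = D-Z$ followed by $DD^d = I-D^\pi$ in two nearly parallel places (once to verify the hypothesis, once to simplify the surviving sum), in each case relying on the pair of hypotheses $C\Gamma^d Z D^d B=0$ and $C\Gamma^d D^\pi B=0$ to annihilate the extraneous terms, and then track the sign so that the final expression agrees with the corollary's statement.
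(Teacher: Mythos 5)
Your proposal is correct and follows essentially the same route as the paper: verify the hypotheses of Theorem~\ref{Sd=(EAdEd),ApiCDdH=0} and simplify its formula using $BA^dC=D-Z$, $DD^d=I-D^\pi$, and the given annihilation conditions. The paper merely compresses your two parallel calculations into the single identity $K\Gamma^dHS=-K\Gamma^dBA^\pi$, from which both $K\Gamma^dHSA^d=0$ and the sign-flipped middle sum follow at once.
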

\begin{proof}  Simple computations show that $K\Gamma^{d}HS=-K\Gamma^{d}BA^{\pi}$ and $K\Gamma^{\pi}D^{d}B=0$,
 from which Theorem \ref{Sd=(EAdEd),ApiCDdH=0} gives the desired result.
\end{proof}

The following theorem may be proved in the same way as Theorem \ref{Sd=(EAdEd),ApiCDdH=0}.
\begin{theorem}\label{(2)KDdBApi=0}
  If $CD^{d}BA^{\pi}=0,~KD^{d}Z\Gamma^{d}H=0,~KD^{\pi}\Gamma^{d}H=0$ and $KD^{d}\Gamma^{\pi}H=0$, then
\begin{multline*}
  S^{d}=(I-K\Gamma^{d}H)A^{d}(I-K\Gamma^{d}H)-\sum_{i=0}^{k-1}A^{i}SK\Gamma^{d}H(A^{d}(I-K\Gamma^{d}H))^{i+2}\\
    -\sum_{i=0}^{k-1}A^{i}CD^{d}\Gamma^{\pi}H(A^{d}(I-K\Gamma^{d}H))^{i+1},
\end{multline*}
where $H=BA^{d},~K=A^{d}C,~\Gamma=HK$ and $k=\ind(A)$.
\end{theorem}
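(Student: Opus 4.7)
The plan is to observe that this statement is essentially the transpose of Theorem \ref{Sd=(EAdEd),ApiCDdH=0}, so the cleanest route is to apply the earlier theorem to transposed data and take the transpose of the resulting formula. A direct proof mirroring the argument for Theorem \ref{Sd=(EAdEd),ApiCDdH=0} (as the author's remark suggests) is also possible.

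For the transposition route, set $\tilde A = A^T$, $\tilde B = C^T$, $\tilde C = B^T$, $\tilde D = D^T$. Then $\tilde H = \tilde B \tilde A^d = K^T$, $\tilde K = \tilde A^d \tilde C = H^T$, $\tilde \Gamma = \tilde H \tilde K = \Gamma^T$, $\tilde S = S^T$, and $\tilde Z = Z^T$. The three hypotheses of Theorem \ref{Sd=(EAdEd),ApiCDdH=0} applied to the tilded data translate to $\tilde A^\pi \tilde C \tilde D^d \tilde B = (CD^d BA^\pi)^T = 0$ (given), $\tilde K \tilde \Gamma^d \tilde H \tilde S \tilde A^d = (A^d SK\Gamma^d H)^T = 0$, and $\tilde K \tilde \Gamma^\pi \tilde D^d \tilde H = (KD^d \Gamma^\pi H)^T = 0$ (given). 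The middle condition is derived from the remaining two hypotheses via
\begin{equation*}
A^d SK\Gamma^d H = K\Gamma^d H - KD^d(D-Z)\Gamma^d H = KD^\pi \Gamma^d H + KD^d Z \Gamma^d H = 0,
\end{equation*}
using $BK = D-Z$ and $AA^d A^d = A^d$. Theorem \ref{Sd=(EAdEd),ApiCDdH=0} then applies to $\tilde S$, and transposing the resulting expression for $\tilde S^d = (S^d)^T$ (whereby products reverse and the factor-wise transposes cancel) yields exactly the claimed formula.

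For the direct proof, retain the idempotent $E = AA^d - K\Gamma^d H$ and mirror the three steps behind Theorem \ref{Sd=(EAdEd),ApiCDdH=0}. Under $CD^d BA^\pi = 0$ and the derived $A^d SK\Gamma^d H = 0$, verify that $ESE^\pi = 0$ (using $E = AA^d(I - K\Gamma^d H)$ together with the two conditions) and that $E^\pi S E^\pi = SE^\pi = AA^\pi + SK\Gamma^d H$ is nilpotent with $\ind(E^\pi SE^\pi) \le k+1$ (using $HA^\pi = 0$, $A^\pi K = 0$, and $(SK\Gamma^d H)^2 = 0$, the last following from $SK\Gamma^d H \cdot A^\pi = 0$ and $SK\Gamma^d H = A^\pi SK\Gamma^d H$). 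Since $P = ESE$, $Q = E^\pi S E^\pi$, and $R = E^\pi S E$ satisfy $PQ = QP = RQ = PR = R^2 = 0$, applying Lemma \ref{(lemma)Chen Xu Wei(2008)} to the transposes $P^T, Q^T, R^T$ and transposing back gives
\begin{equation*}
S^d = (ESE)^d + \sum_{i=0}^{t-1}(E^\pi SE^\pi)^i (E^\pi SE) ((ESE)^d)^{i+2}.
\end{equation*}
A dual of Lemma \ref{ESEd=EAdE} under $A^d SK\Gamma^d H = 0$ and $KD^d \Gamma^\pi H = 0$ then gives $(ESE)^\# = EA^d E$, by verifying $ESE \cdot EA^d E = E$ and invoking Lemma \ref{ECE}.

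The hard part of the direct approach is the final simplification: substituting $(EA^d E)^{i+2} = E(A^d E)^{i+2}$ and $(E^\pi S E^\pi)^i = A^{i-1}(AA^\pi + SK\Gamma^d H)$ (for positive $i$), expanding $E^\pi SE$, and applying $KD^d \Gamma^\pi H = 0$ together with $K\Gamma^\pi \Gamma^d = 0$ and the various vanishing products, the sum collapses to $-\sum_{i=0}^{k-1} A^i SK\Gamma^d H(A^d E)^{i+2} - \sum_{i=0}^{k-1} A^i CD^d \Gamma^\pi H (A^d E)^{i+1}$; combined with $EA^d E = (I - K\Gamma^d H)A^d(I - K\Gamma^d H)$ and $A^d E = A^d(I - K\Gamma^d H)$, this yields the stated formula. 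This calculation is the formal dual of the one culminating in equation \eqref{eq:Th2b}, and the transposition route avoids it entirely.
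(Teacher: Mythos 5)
Your proposal is correct, and your primary route is genuinely different from the paper's. The paper offers no proof at all for this theorem beyond the remark that it ``may be proved in the same way as Theorem \ref{Sd=(EAdEd),ApiCDdH=0}'', i.e.\ it implicitly intends the direct mirror argument that you sketch second. Your transposition argument instead derives the statement as a formal consequence of the already-proven Theorem \ref{Sd=(EAdEd),ApiCDdH=0}: the dictionary $\tilde A=A^T$, $\tilde B=C^T$, $\tilde C=B^T$, $\tilde D=D^T$ correctly gives $\tilde H=K^T$, $\tilde K=H^T$, $\tilde\Gamma=\Gamma^T$, $\tilde S=S^T$, $\tilde Z=Z^T$, and since $(M^T)^d=(M^d)^T$ and $\ind(M^T)=\ind(M)$, transposing the conclusion yields exactly the stated formula with the sums and exponents matching. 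The one genuinely new step --- and you supply it --- is that the hypotheses of this theorem are not the literal transposes of those of Theorem \ref{Sd=(EAdEd),ApiCDdH=0}: the needed condition $A^dSK\Gamma^dH=0$ must be deduced from $KD^\pi\Gamma^dH=0$ and $KD^dZ\Gamma^dH=0$, which your computation $A^dSK=K-KD^d(D-Z)=KD^\pi+KD^dZ$ does correctly (using $BK=D-Z$). What the transposition route buys is that none of the computation culminating in \eqref{eq:Th2b} has to be repeated, and the vague ``may be proved in the same way'' is replaced by a precise duality; what the direct route buys is self-containedness, and your sketch of it (including the correct observation that Lemma \ref{(lemma)Chen Xu Wei(2008)} must itself be applied to transposes to get the formula $P^d+\sum Q^iR(P^d)^{i+2}$ with the factors in reversed order) is also sound. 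Either version is acceptable; the transposition argument is the cleaner one to actually write down.
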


 Theorem \ref{(2)KDdBApi=0} generalizes \cite[Theorem 2.2]{Cvetkovic (2012)}, \cite[Theorem 2.9]{Shakoor} and \cite[Theorem 2.2]{Wei (2002)}.

\section{Generalized Jacobson's Lemma}
\begin{lemma}\label{triangle}(\cite{Hartwig (1977)} and \cite{Meyer (1977)})
  Let $M=\left(\begin{array}{cc}
                   A & C \\
                   0 & D
                 \end{array}\right)$
     and $N=\left(\begin{array}{cc}
                   D & 0 \\
                   C & A
                 \end{array}\right)\in \mathbb{C}^{n\times n}$,
  where $A$ and $D$ are square matrices. Then
  $$M^{d}=\left(\begin{array}{cc}
                   A^{d} & X \\
                   0 & D^{d}
                 \end{array}\right) ~~~\text{and}~~
  N^{d}=\left(\begin{array}{cc}
                   D^{d} & 0 \\
                   X & A^{d}
                 \end{array}\right),$$
   where $$
   X=\sum_{i=0}^{s-1}(A^{d})^{i+2}CD^{i}D^{\pi}+A^{\pi}\sum_{i=0}^{r-1}A^{i}C(D^{d})^{i+2}-A^{d}CD^{d},$$
 $r=\ind(A)$ and $s=\ind(D)$.
\end{lemma}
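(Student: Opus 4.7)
The plan is to apply Lemma~\ref{P+Q} to the splitting
$$M = P + Q, \qquad P = \begin{pmatrix} 0 & 0 \\ 0 & D \end{pmatrix}, \qquad Q = \begin{pmatrix} A & C \\ 0 & 0 \end{pmatrix},$$
which satisfies $PQ = 0$. The Drazin data for $P$ is immediate: $P^d = \begin{pmatrix}0 & 0 \\ 0 & D^d\end{pmatrix}$, $P^\pi = \begin{pmatrix}I & 0 \\ 0 & D^\pi\end{pmatrix}$, and $\ind(P) = s$. The preparatory task is to establish
$$Q^d = \begin{pmatrix} A^d & (A^d)^2 C \\ 0 & 0 \end{pmatrix}, \qquad Q^\pi = \begin{pmatrix} A^\pi & -A^d C \\ 0 & I \end{pmatrix}, \qquad \ind(Q) \leq r+1,$$
which I would verify by direct inspection of the three Drazin axioms for $Q$ using the closed form $Q^n = \begin{pmatrix}A^n & A^{n-1}C \\ 0 & 0\end{pmatrix}$ for $n \geq 1$ together with the identity $A^{r+2}(A^d)^2 = A^r$; an easy induction then gives $(Q^d)^{i+1} = \begin{pmatrix}(A^d)^{i+1} & (A^d)^{i+2} C \\ 0 & 0\end{pmatrix}$.

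Substituting into the Hartwig formula, the $(1,1)$ and $(2,2)$ block entries collapse routinely to $A^d$ and $D^d$, and the $(2,1)$ block to $0$. In the $(1,2)$ block, the second sum $\sum_{i=0}^{s-1}(Q^d)^{i+1}P^i P^\pi$ contributes exactly $\sum_{i=0}^{s-1}(A^d)^{i+2}C D^i D^\pi$; the $i = 0$ term of the first sum produces $-A^d C D^d$ from $Q^\pi P^d$; and after reindexing $j = i-1$ the remaining $i \geq 1$ terms of the first sum yield $A^\pi \sum_{j=0}^{r-1} A^j C (D^d)^{j+2}$. These three contributions together are precisely $X$. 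The expression for $N^d$ follows either by applying the mirror-image splitting $N = \begin{pmatrix}D & 0 \\ 0 & 0\end{pmatrix} + \begin{pmatrix}0 & 0 \\ C & A\end{pmatrix}$ (which again satisfies $PQ=0$) and repeating the computation, or simply by transposing: $N^\top$ has exactly the block-upper-triangular form of $M$, so $N^d = ((N^\top)^d)^\top$ reduces to the case already done.

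The main technical obstacle is pinning down the index of $Q$: a naive check suggests $\ind(Q) = r$, but $Q^{r+1}Q^d = Q^r$ can fail in general because the $(1,2)$ entry $A^{r+1}(A^d)^2 C$ need not equal $A^{r-1} C$. Taking $t = r + 1$ in Lemma~\ref{P+Q} is precisely what allows the first summation index to reach $i = r$, producing the full $A^\pi\sum_{j=0}^{r-1} A^j C(D^d)^{j+2}$ term of $X$ rather than a truncated version. Once this subtlety is handled, the rest of the argument is routine block-matrix multiplication and reindexing of sums.
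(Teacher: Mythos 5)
The paper offers no proof of this lemma at all --- it is quoted directly from Hartwig--Shoaf and Meyer--Rose --- so your derivation is necessarily a different route, and it is a sound one: splitting $M$ into a strictly "$D$-part" $P$ and an "$A$-part" $Q$ with $PQ=0$ and invoking Lemma~\ref{P+Q} does reproduce the stated formula, and you correctly isolate the one real subtlety, namely that $\ind(Q)$ may equal $r+1$ rather than $r$ (and that enlarging the summation bound in Lemma~\ref{P+Q} is harmless because $Q^{\pi}Q^{i}=0$ for $i\geq\ind(Q)$, while truncating it is not). Your block formulas for $Q^{d}$, $Q^{\pi}$ and the three contributions to the $(1,2)$ entry all check out, as does the transpose reduction for $N$ via $(N^{\top})^{d}=(N^{d})^{\top}$. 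Two small points deserve attention. First, the claim $\ind(P)=s$ fails in the edge case where $D$ is invertible: then $s=0$ but $\ind(P)=1$, and the $i=0$ term $(Q^{d})P^{\pi}$ of the second sum --- which is exactly what produces the $A^{d}$ in the $(1,1)$ block --- would be dropped if you literally summed to $s-1=-1$. The fix is to take the upper limit $\max(s,1)$ (or any bound $\geq\ind(P)$, again harmless since $P^{i}P^{\pi}=0$ for $i\geq\ind(P)$) and observe that when $s=0$ the extra term contributes nothing to the $(1,2)$ block because $D^{\pi}=0$. Second, a logical caveat rather than an error: the published proof of Lemma~\ref{P+Q} itself rests on this block-triangular formula, so your argument is circular as a foundational matter, though perfectly legitimate inside this paper, where Lemma~\ref{P+Q} is taken as a quoted black box.
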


\begin{lemma}\label{Jacobson,Schur,A^{e}SA^{e}}
If $D^{d}BAA^{d}=D^{d}DBA^{d}$, then
$$
   S_{A}^{d}=A^{d}+A^{d}CZ_{D}^{d}D^{d}BAA^{d}-\sum_{i=0}^{s-1}(A^{d})^{i+2}CDD^{d}Z_{D}^{i}Z_{D}^{\pi}D^{d}BAA^{d},
$$
where $S_{A}=AA^{d}SAA^{d},~Z_{D}=DD^{d}ZDD^{d}$ and $s=\ind(Z_{D})$.
\end{lemma}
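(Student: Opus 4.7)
The plan is to compute $S_{A}^{d}$ by applying Lemma \ref{triangle} to a carefully chosen auxiliary block upper-triangular matrix $T$, and then recognizing $S_{A}^{d}$ as $A^{d}$ plus the $(1,2)$-block of $T^{d}$ multiplied by the projector $D^{d}BAA^{d}$. First, since $D^{d}D=DD^{d}$, the hypothesis $D^{d}BAA^{d}=D^{d}DBA^{d}$ is equivalent to $D^{d}BAA^{d}=DD^{d}BA^{d}$. Combined with the identity $AA^{d}A\cdot AA^{d}=AA^{d}A$ (which holds because $A$ commutes with $AA^{d}$), this allows the rewriting
$$S_{A}=AA^{d}A\cdot AA^{d}-AA^{d}CD^{d}BAA^{d}=AA^{d}A-AA^{d}CDD^{d}BA^{d}.$$

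Next, I would introduce the $2n\times 2n$ block upper triangular matrix
$$T=\begin{pmatrix} AA^{d}A & -AA^{d}CDD^{d} \\ 0 & Z_{D} \end{pmatrix}$$
and apply Lemma \ref{triangle} with $r=\ind(AA^{d}A)=1$ and $s=\ind(Z_{D})$. Since $(AA^{d}A)^{d}=A^{d}$, $(AA^{d}A)^{\pi}=A^{\pi}$, and the single term in the middle sum vanishes because $A^{\pi}\cdot AA^{d}CDD^{d}=0$, the $(1,2)$-block of $T^{d}$ simplifies, after absorbing $AA^{d}$ into adjacent factors of $A^{d}$ and using $DD^{d}Z_{D}^{d}=Z_{D}^{d}$, to
$$X=A^{d}CZ_{D}^{d}-\sum_{i=0}^{s-1}(A^{d})^{i+2}CDD^{d}Z_{D}^{i}Z_{D}^{\pi}.$$

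Finally, I would establish the identity $S_{A}^{d}=A^{d}+X\cdot D^{d}BAA^{d}$, which, upon substituting the expression for $X$, immediately recovers the stated formula. This identification requires verifying the three Drazin-inverse axioms $S_{A}W=WS_{A}$, $WS_{A}W=W$, and $S_{A}^{k+1}W=S_{A}^{k}$ for $W:=A^{d}+X\cdot D^{d}BAA^{d}$. The key algebraic inputs are the hypothesis in its rewritten form $D^{d}BAA^{d}=DD^{d}BA^{d}$, the Schur-complement identity $DD^{d}BA^{d}CDD^{d}=DD^{d}D-Z_{D}$ (immediate from the definition of $Z_{D}$), and the routine relations $DD^{d}Z_{D}=Z_{D}=Z_{D}DD^{d}$, $Z_{D}Z_{D}^{d}=Z_{D}^{d}Z_{D}$, and $A^{d}AA^{d}=A^{d}$. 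The main obstacle will be this verification: although $W$ is natural from the Sherman--Morrison--Woodbury heuristic on the core subspace $R(AA^{d})$ (where $A$ acts invertibly with inverse $A^{d}$), the direct axiom check demands careful telescoping of the $Z_{D}^{\pi}$ sum against $S_{A}$, with the commutativity $S_{A}W=WS_{A}$ being the most delicate step.
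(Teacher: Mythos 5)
Your candidate formula is the correct one, and your computation of the $(1,2)$-block $X$ of $T^{d}$ via Lemma \ref{triangle} is sound (the middle sum indeed vanishes because $(AA^{d}A)^{\pi}AA^{d}=0$, and $DD^{d}Z_{D}^{d}=Z_{D}^{d}$). But the proof has a genuine gap at its central step: the identification $S_{A}^{d}=A^{d}+X\,D^{d}BAA^{d}$ is only asserted, with the verification of the three Drazin-inverse axioms deferred and described by you as ``the main obstacle.'' That verification \emph{is} the content of the lemma; without it nothing has been proved. Note also that in your argument the auxiliary matrix $T$ is not logically connected to $S_{A}$ at all --- it serves only as a mnemonic for writing down the candidate $X$ --- so Lemma \ref{triangle} is doing no work toward the identification itself. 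The axiom check is not routine: besides $S_{A}W=WS_{A}$ and $WS_{A}W=W$ you must show that $S_{A}-S_{A}^{2}W$ is nilpotent (equivalently $S_{A}^{k+1}W=S_{A}^{k}$ for some $k$), which requires controlling the index of $S_{A}$ and a nontrivial telescoping of the $Z_{D}^{i}Z_{D}^{\pi}$ terms against $S_{A}$.

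The paper's proof avoids any direct verification. It observes that the block upper triangular matrix $\bigl(\begin{smallmatrix} S_{A} & A^{e}CD^{e}\\ 0 & DD^{e}\end{smallmatrix}\bigr)$ has Drazin inverse with $(1,1)$-block exactly $S_{A}^{d}$ (Lemma \ref{triangle}), factors it as $MN$ with $M=\bigl(\begin{smallmatrix} AA^{e} & A^{e}CD^{e}\\ D^{e}BA^{e} & DD^{e}\end{smallmatrix}\bigr)$ and $N=\bigl(\begin{smallmatrix} I & 0\\ -D^{d}BA^{e} & I\end{smallmatrix}\bigr)$, and applies Cline's formula $(MN)^{d}=M\bigl((NM)^{d}\bigr)^{2}N$. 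The hypothesis $D^{d}BAA^{d}=D^{d}DBA^{d}$ is exactly what makes $NM$ block upper triangular with diagonal blocks $AA^{e}$ and $Z_{D}$, so Lemma \ref{triangle} computes $(NM)^{d}$ explicitly and $S_{A}^{d}$ is read off as the $(1,1)$-block of the product. To repair your route you must either carry out the axiom verification in full, or (better) connect your rewriting $S_{A}=AA^{d}A-AA^{d}CDD^{d}\cdot D^{d}BA^{d}$ to such a Cline-type factorization, which is essentially what the paper does.
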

\begin{proof}By abuse of notation we write $A^{2d}$ instead of $(A^d)^2$ for any $n\times n$ matrix $A$.
Let $A^{e}=AA^{d}$ and $D^{e}=DD^{d}$.
Note that
$$
    \left(\begin{array}{cc}
   S_{A} & A^{e}CD^{e} \\
    0    & DD^{e}
   \end{array}\right)   =\left(\begin{array}{cc}
                                AA^{e} & A^{e}CD^{e} \\
                                D^{e}BA^{e} & DD^{e}
                             \end{array}\right)
                             \left(\begin{array}{cc}
                               I & 0 \\
                               -D^{d}BA^{e} & I
                             \end{array}\right).
$$
For short let us introduce the temporary notation
$$
                                          M =\left(\begin{array}{cc}
                                AA^{e} & A^{e}CD^{e} \\
                                D^{e}BA^{e} & DD^{e}
                             \end{array}\right)~~~\text{and}~~~
                          N =  \left(\begin{array}{cc}
                               I & 0 \\
                               -D^{d}BA^{e} & I
                             \end{array}\right).
$$
Then Cline's formula gives
 \begin{align*}
                           &\left(\begin{array}{cc}
                             S_{A} & A^{e}CD^{e} \\
                              0           & DD^{e}
                             \end{array}\right)^{d}
                            = M(NM)^{2d}N.
    \end{align*}
A calculation yields  $NM=\left(\begin{array}{cc}
                              AA^{e}                      & A^{e}CD^{e} \\
                              D^{e}BA^{e}-D^{d}BA^{e}A    & DD^{e}-D^{d}BA^{e}CD^{e}
                             \end{array}\right)$.
Since $D^{d}BAA^{d}=D^{d}DBA^{d}$, we have $D^{e}BA^{e}-D^{d}BA^{e}A=0$ and $DD^{e}-D^{d}BA^{e}CD^{e}=Z_{D}$.
Thus $NM=\left(\begin{array}{cc}
       AA^{e}  & A^{e}CD^{e} \\
          0    & Z_D
    \end{array}\right)$.
By Lemma~\ref{triangle} we have
$$(NM)^{2d} =\left(\begin{array}{cc}
                              A^{d}   & X \\
                              0       &  Z_{D}^{d}
                             \end{array}\right)^{2}
                          =\left(\begin{array}{cc}
                              A^{2d}   &     A^{d}X+XZ_{D}^{d}\\
                               0       &    Z_{D}^{2d}
                             \end{array}\right),
$$
where $X =\sum_{i=0}^{s-1}(A^{d})^{i+2}CD^{e}Z_{D}^{i}Z_{D}^{\pi}-A^{d}CZ_{D}^{d}$ and $s=\ind(Z_D)$.
Note that $A^{d}X+XZ_{D}^{d}=A^{d}X-A^{d}CZ_{D}^{2d}$.
Then
$$
   \left(\begin{array}{cc}
     S_{A} & A^{e}CD^{e} \\
      0           & DD^{e}
    \end{array}\right)^{d}
 =\left(\begin{array}{cc}
   A^{d}-XD^{d}BA^{e}         &   X \\
   D^{e}BA^{2d}-YD^{d}BA^{e}  &   Y
   \end{array}\right),
$$
where $Y= D^{e}B(A^{d}X-A^{d}CZ_{D}^{2d})+DZ_{D}^{2d}$.
Hence
$$
  S_{A}^{d}=A^{d}+A^{d}CZ_{D}^{d}D^{d}BA^{e}-\sum_{i=0}^{s-1}(A^{d})^{i+2}CD^{e}Z_{D}^{i}Z_{D}^{\pi}D^{d}BA^{e},
$$
where
$s=\ind(Z_{D})$.
\end{proof}

Combining Lemma \ref{(lemma) P nilpotent} and Lemma \ref{Jacobson,Schur,A^{e}SA^{e}} gives the following result.
\begin{lemma}\label{(lemma)Jacobson,Schur,S}
  If $A^{\pi}CD^{d}B=0~{and}~D^{d}BAA^{d}=D^{d}DBA^{d}$, then
$$
  S^{d}=S_{A}^{d}+\sum_{i=0}^{k-1}(S_{A}^{d})^{i+2}SA^{i}A^{\pi},
$$
where
$$
   S_{A}^{d}=A^{d}+A^{d}CZ_{D}^{d}D^{d}BAA^{d}-\sum_{i=0}^{s-1}(A^{d})^{i+2}CDD^{d}Z_{D}^{i}Z_{D}^{\pi}D^{d}BAA^{d},
$$
$Z_{D}=DD^{d}ZDD^{d}$, $k = \ind(A)$ and $s=\ind(Z_{D})$.
\end{lemma}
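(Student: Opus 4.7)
The plan is essentially to combine the two previous lemmas as the statement itself suggests. However, there is a subtle compatibility point that must be verified first: Lemma~\ref{(lemma) P nilpotent} uses the definition $S_A = SAA^d$, while Lemma~\ref{Jacobson,Schur,A^{e}SA^{e}} uses $S_A = AA^d S AA^d$. So the first step is to check that under the hypothesis $A^{\pi}CD^{d}B=0$ these two definitions coincide, which amounts to showing $A^\pi S AA^d = 0$.

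To establish this, I would write $A^\pi S AA^d = A^\pi A \cdot AA^d - A^\pi CD^d B \cdot AA^d$. The second term vanishes directly by the hypothesis $A^\pi CD^d B=0$. For the first, I would use that $A^\pi$ commutes with $A$ and that $AA^d$ is idempotent with $A^\pi(AA^d) = (I-AA^d)AA^d = 0$; hence $A^\pi A \cdot AA^d = A \cdot A^\pi AA^d = 0$. This reconciles the two definitions of $S_A$.

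Once this is in hand, the rest is immediate. The hypothesis $A^{\pi}CD^{d}B=0$ allows application of Lemma~\ref{(lemma) P nilpotent}, which yields
$$
S^{d}=S_{A}^{d}+\sum_{i=0}^{k-1}(S_{A}^{d})^{i+2}SA^{i}A^{\pi},
$$
where $k=\ind(A)$. The hypothesis $D^{d}BAA^{d}=D^{d}DBA^{d}$ allows application of Lemma~\ref{Jacobson,Schur,A^{e}SA^{e}}, which provides the explicit expression
$$
S_{A}^{d}=A^{d}+A^{d}CZ_{D}^{d}D^{d}BAA^{d}-\sum_{i=0}^{s-1}(A^{d})^{i+2}CDD^{d}Z_{D}^{i}Z_{D}^{\pi}D^{d}BAA^{d},
$$
with $Z_D = DD^d Z DD^d$ and $s=\ind(Z_D)$. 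Substituting the latter into the former gives the claimed formula verbatim.

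There is no real obstacle here: the only conceptual point is the identification of $SAA^d$ with $AA^d S AA^d$ under the stated hypothesis, and that is a short computation using the idempotence of $AA^d$, the commutativity of $A$ with $A^\pi$, and the assumption $A^\pi CD^d B=0$. After that, the proof is a one-line substitution.
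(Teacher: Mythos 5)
Your proposal is correct and follows exactly the route the paper takes: the paper's entire ``proof'' is the single sentence that the result is obtained by combining Lemma~\ref{(lemma) P nilpotent} and Lemma~\ref{Jacobson,Schur,A^{e}SA^{e}}. Your extra step checking that $SAA^{d}=AA^{d}SAA^{d}$ under $A^{\pi}CD^{d}B=0$ (so the two lemmas' definitions of $S_{A}$ agree) is a genuine and correctly executed detail that the paper leaves implicit here, though it does state the same identity explicitly in the proof of Theorem~\ref{ApiCDdB=0}.
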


To represent $S^{d}$ in terms of $Z^d$ we need an extra assumption.

\begin{theorem}\label{Jacobson,Schur,S}
If $A^{\pi}CD^{d}B=0,~D^{\pi}BA^{d}C=0$ and $D^{d}BAA^{d}=D^{d}DBA^{d}$, then
$$
  S^{d}=S_{A}^{d}+\sum_{i=0}^{k-1}(S_{A}^{d})^{i+2}SA^{i}A^{\pi},
$$
where $k = \ind(A)$,
$$
   S_{A}^{d}=A^{d}+A^{d}CZ^{d}D^{d}BAA^{d}-\sum_{i=0}^{s-1}(A^{d})^{i+2}CDD^{d}Z^{i}Z^{\pi}D^{d}BAA^{d},
$$
and $s=\ind(Z)$.
\end{theorem}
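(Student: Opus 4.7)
The strategy is to bootstrap from Lemma~\ref{(lemma)Jacobson,Schur,S}, which under hypotheses (1) $A^\pi CD^dB=0$ and (3) $D^dBAA^d=D^dDBA^d$ alone already delivers the outer identity
\[
 S^d = S_A^d + \sum_{i=0}^{k-1}(S_A^d)^{i+2}SA^iA^\pi
\]
together with a formula for $S_A^d$ phrased in terms of $Z_D=DD^dZDD^d$. Thus the entire task reduces to showing that, once the additional hypothesis (2) $D^\pi BA^dC=0$ is imposed, the $Z_D$-formula coincides with the claimed $Z$-formula for $S_A^d$.

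The algebraic bridge supplied by hypothesis (2) is
\[
 D^\pi Z = D^\pi(D - BA^dC) = D^\pi D - 0 = DD^\pi,
\]
which iterates to $D^\pi Z^n = D^nD^\pi$ for every $n\geq 1$. Combined with the standard identity $Z^d = Z^n Z^{d(n+1)}$ (valid for every $n\geq 1$) and with $D^nD^\pi=0$ for $n\geq\ind(D)$, this forces the crucial consequence $D^\pi Z^d = 0$, equivalently $DD^dZ^d = Z^d$.

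With $D^\pi Z^d=0$ in hand I would verify by direct checking of the Drazin axioms the substitution rules
\[
 Z_D^d = Z^d DD^d,\qquad Z_D^\pi = I - ZZ^d DD^d,
\]
and, by a short induction using $DD^d Z = Z - DD^\pi$ and $D^\pi DD^d=0$, the power identity $Z_D^n = Z^nDD^d$ for $n\geq 1$. The comparison of the two $S_A^d$-formulas is then mechanical. The leading terms match because $A^dCZ_D^dD^d = A^dCZ^d(DD^dD^d) = A^dCZ^dD^d$. For each summand with $i\geq 1$,
\[
 Z_D^iZ_D^\pi = Z^iDD^d - Z^{i+1}Z^dDD^d = Z^iZ^\pi DD^d,
\]
so $DD^dZ_D^iZ_D^\pi D^d = Z^iZ^\pi D^d$, which coincides with $DD^dZ^iZ^\pi D^d$ because the cross-correction $D^iD^\pi\cdot D^d$ dies via $D^\pi D^d=0$. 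The $i=0$ case is a direct calculation of exactly the same type.

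The only remaining subtlety is that Lemma~\ref{(lemma)Jacobson,Schur,S} caps its sum at $\ind(Z_D)-1$ while the theorem caps at $\ind(Z)-1$. Since the summands agree term by term, it suffices to observe that beyond either cap the summand is zero on both sides: for $i\geq\ind(Z)$ one has $Z^iZ^\pi=0$, and for $i\geq\ind(Z_D)$ the equality $Z^iZ^\pi DD^d = Z_D^iZ_D^\pi = 0$ combined with $D^d=DD^dD^d$ forces $Z^iZ^\pi D^d=0$. The genuinely delicate point of the whole argument is the identity $D^\pi Z^d=0$; without it, neither the substitution $Z_D^d=Z^dDD^d$ nor the term-wise equality of the two sums would be accessible.
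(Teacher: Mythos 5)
Your proof is correct and follows essentially the same route as the paper: both reduce the theorem to Lemma~\ref{(lemma)Jacobson,Schur,S} and then use the hypothesis $D^{\pi}BA^{d}C=0$ to replace $Z_{D}$ by $Z$ via the identities $Z_{D}^{d}D^{d}=Z^{d}D^{d}$ and $Z_{D}^{i}Z_{D}^{\pi}D^{d}=Z^{i}Z^{\pi}D^{d}$, together with the observation that the change of summation cap from $\ind(Z_{D})$ to $\ind(Z)$ is harmless. The only minor divergence is in how the key fact $D^{\pi}Z^{d}=0$ (equivalently $DD^{d}Z^{d}=Z^{d}$) is obtained: the paper reads it off from the analogue of Lemma~\ref{(lemma) P nilpotent} applied to $Z$, whereas you derive it directly from $D^{\pi}Z^{n}=D^{n}D^{\pi}$ and $Z^{d}=Z^{n}(Z^{d})^{n+1}$ and then verify $Z_{D}^{d}=Z^{d}DD^{d}$ by checking the Drazin axioms, which is an equally valid and arguably more self-contained argument.
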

\begin{proof}
Let $D^{e}=DD^{d}$ and $Z_{D}=D^{e}ZD^{e}$.
  Using an analogous strategy as Lemma \ref{(lemma) P nilpotent} we get
$$
  Z^{d}=Z_{D}^{d}+\sum_{i=0}^{t-1}(Z_{D}^{d})^{i+2}ZD^{i}D^{\pi},
$$
where $t= \ind(D)$.
Note that $Z^{d}D^{e}=Z_{D}^{d},~D^{e}Z^{d}=Z^{d}$ and $Z_{D}=ZD^{e}$.
Then $Z_{D}^{i}=Z^{i}D^{e}$ and $Z_{D}^{\pi}D^{d}=(I-Z_{D}Z^{d})D^{d}=Z^{\pi}D^{d}$,
implying $$Z_{D}^{i}Z_{D}^{\pi}D^{d}=Z^{i}D^{e}(I-Z^{e})D^{d}=Z^{i}Z^{\pi}D^{d}.$$
Now the theorem follows from Lemma~\ref{(lemma)Jacobson,Schur,S}.
\end{proof}

\begin{corollary}
  Let $A$ and $D$ be invertible. If $DB=BA$, then
$$
 (A-CD^{-1}B)^{d}=A^{-1}+A^{-1}CZ^{d}D^{-1}B-\sum_{i=0}^{s-1}A^{-i-2}CZ^{i}Z^{\pi}D^{-1}B,
$$
where $s=\ind(Z)$.
\end{corollary}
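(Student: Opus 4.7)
The plan is to obtain this corollary as a direct specialization of Theorem~\ref{Jacobson,Schur,S}, so the work is entirely in checking the hypotheses and then simplifying the output in the invertible case.

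First I would verify the three hypotheses of Theorem~\ref{Jacobson,Schur,S}. Since $A$ and $D$ are invertible, we have $A^{d}=A^{-1}$, $D^{d}=D^{-1}$, and $A^{\pi}=0$, $D^{\pi}=0$. Thus $A^{\pi}CD^{d}B=0$ and $D^{\pi}BA^{d}C=0$ hold trivially. For the third hypothesis, observe that
$$
D^{d}BAA^{d}=D^{-1}B\quad\text{and}\quad D^{d}DBA^{d}=BA^{-1},
$$
so $D^{d}BAA^{d}=D^{d}DBA^{d}$ is equivalent to $D^{-1}B=BA^{-1}$, i.e., to $DB=BA$, which is precisely the standing assumption.

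Next I would specialize the conclusion of Theorem~\ref{Jacobson,Schur,S}. Since $A$ is invertible, $\ind(A)=0$, so by the convention $\sum_{i=0}^{-1}\ast=0$ the outer correction sum vanishes and $S^{d}=S_{A}^{d}$. Moreover, $AA^{d}=I$ forces $S_{A}=S$, and $DD^{d}=I$ simplifies the factor $CDD^{d}$ to $C$ in the $S_{A}^{d}$ formula. Substituting $A^{d}=A^{-1}$ and $D^{d}=D^{-1}$ then yields
$$
S^{d}=A^{-1}+A^{-1}CZ^{d}D^{-1}B-\sum_{i=0}^{s-1}A^{-i-2}CZ^{i}Z^{\pi}D^{-1}B,
$$
as claimed.

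There is essentially no obstacle here; the only non-cosmetic step is recognizing that the mixed identity $D^{d}BAA^{d}=D^{d}DBA^{d}$ collapses to the classical commutation $DB=BA$ under invertibility, after which the representation in Theorem~\ref{Jacobson,Schur,S} applies verbatim.
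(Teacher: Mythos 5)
Your proposal is correct and matches the paper's (implicit) intent exactly: the corollary is stated immediately after Theorem~\ref{Jacobson,Schur,S} precisely as the specialization to invertible $A$ and $D$, where $A^{\pi}=D^{\pi}=0$, $\ind(A)=0$ kills the outer sum, and $D^{d}BAA^{d}=D^{d}DBA^{d}$ reduces to $DB=BA$. Your verification of the hypotheses and the simplification of $S_{A}^{d}$ are exactly the intended argument.
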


If $A$ and $D$ are identity matrices in the corollary above, then we recover a generalization of Jacobson's Lemma (see \cite[Theorem 3.6]{Castro (2010)}) for the case of matrices.
\begin{corollary}Let $s=\ind(I-BC)$. Then
$$
  (I-CB)^{d}=I+C(I-BC)^{d}B-\sum_{i=0}^{s-1}C(I-BC)^{i}(I-BC)^{\pi}B.
$$
\end{corollary}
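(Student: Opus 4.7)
The plan is to obtain this corollary as a direct specialization of the immediately preceding corollary, applied with $A=I$ and $D=I$. The first step is to check that the hypotheses of that corollary are satisfied in this degenerate case: both identity matrices are invertible, and the commutation assumption $DB=BA$ collapses to $B=B$, which holds automatically, so the previous corollary applies without any further work.

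Next I would compute the specialized data. Setting $A=I$ and $D=I$, the generalized Schur complements become $S = A - CD^{-1}B = I - CB$ and $Z = D - BA^{-1}C = I - BC$, so $\ind(Z) = \ind(I-BC) = s$, consistent with the statement. Every occurrence of $A^{-1}$, $D^{-1}$, and the power $A^{-i-2}$ in the formula of the previous corollary simplifies to the identity matrix, so each of these factors can simply be dropped.

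Substituting into the formula
$$(A-CD^{-1}B)^d = A^{-1} + A^{-1}CZ^d D^{-1}B - \sum_{i=0}^{s-1} A^{-i-2}CZ^i Z^\pi D^{-1}B$$
from the previous corollary yields
$$(I-CB)^d = I + CZ^d B - \sum_{i=0}^{s-1} CZ^i Z^\pi B,$$
which is exactly the asserted identity after rewriting $Z = I-BC$.

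The main obstacle here is essentially nonexistent: all the substantive work has been done in Theorem~\ref{Jacobson,Schur,S} and the preceding corollary, and what remains is bookkeeping to confirm that the hypotheses hold trivially and that each factor collapses as expected under the specialization $A=D=I$.
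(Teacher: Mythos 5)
Your proposal is correct and is exactly the paper's own argument: the text introduces this corollary with the remark that it is obtained by taking $A$ and $D$ to be identity matrices in the preceding corollary, which is precisely your specialization (the hypothesis $DB=BA$ becoming trivial, $Z=I-BC$, and all inverse factors collapsing to $I$). Nothing further is needed.
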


\bibliographystyle{amsplain}

\end{document}